\newtheorem{theorem}{Theorem}[section]
\newtheorem{acknowledgment}[theorem]{Acknowledgment}
\newtheorem{example}[theorem]{Example}
\newtheorem{lemma}[theorem]{Lemma}
\newenvironment{proof}[1][Proof]{\noindent\textbf{#1.} }{\ \rule{0.5em}{0.5em}}
\begin{document}

\title{A MAXIMUM PRINCIPLE FOR INFINITE HORIZON DELAY EQUATIONS}
\author{\ N.AGRAM\thanks{Laboratory of Applied Mathematics, University Med Khider, Po.
Box 145, Biskra $\left(  07000\right)  $ Algeria.}, S.HAADEM\thanks{ Center of
Mathematics for Applications (CMA), University of Oslo, Box 1053 Blindern,
N-0316 Oslo, Norway.}, B. ØKSENDAL\thanks{Center of Mathematics for
Applications (CMA), University of Oslo, Box 1053 Blindern, N-0316 Oslo,
Norway. } \thanks{The research leading to these results has received funding
from the European Research Council under the European Community's Seventh
Framework Programme (FP7/2007-2013) / ERC grant agreement no [228087].} and
F.PROSKE\thanks{Department of Mathematics, University of Oslo, Box 1053
Blindern, N-0316 Oslo, Norway.}. \ \ \ \ \ \ }
\date{\ \ 28 June 2012}
\maketitle

\textsf{Keywords: Infinite horizon; Optimal control; Stochastic delay equation; L\'{e}vy processes; Maximum principle; Hamiltonian;  Adjoint process; Partial information. \newline\newline
2010 Mathematics Subject Classification: \newline
Primary 93EXX; 93E20; 60J75; 34K50 \newline
Secondary 60H10; 60H20; 49J55}

\begin{abstract}
We prove a maximum principle of optimal control of stochastic delay equations
on infinite horizon. We establish first and second sufficient stochastic
maximum principles as well as necessary conditions for that problem. We
illustrate our results by an application to the optimal consumption rate from
an economic quantity.

\end{abstract}

\section{Introduction}

To solve the stochastic control problems, there are two approaches: The dynamic
programming method (HJB equation) and the maximum principle.

In this paper, our system is governed by the stochastic differential delay
equation (SDDE in short):%

\begin{equation*}
\begin{cases}
dX(t) =b\left(  t,X(t),Y(t),A(t),u(t)\right)dt\\
+ \sigma\left(t,X(t),Y(t),A(t),u(t)\right)  dB(t)\\
+ \int_{\mathbb{R}_0} \theta\left(  t,X(t),Y(t),A(t),u(t),z\right)\overset{\sim}{N}(dt,dz);
& t\in\left[  0,\infty\right) \\
X(t) =X_{0}(t); &t\in [-\delta,0] \\
Y(t) =X(t-\delta) &t\in [0,\infty)\\
A(t)=\int_{t-\delta}^{t}e^{-\rho(t-r)}X(r)dr &t\in [0,\infty)
\end{cases}
\end{equation*}
which maximise the functional
\begin{equation}
\begin{array}
[c]{c}%
J(u)=E\left[
{\displaystyle\int\limits_{0}^{\infty}}
f\left(  t,X(t),Y(t)\text{, }A(t)\text{, }u(t)\right)  \text{ }dt\right]
\end{array}
\tag{$1.2$}%
\end{equation}

where $u(t)$ is the control process.

The SDDE is not Markovian so we cannot use the dynamic programming method.However, we will prove stochastic maximum principles for this problem.

A sufficient maximum principle in infinite horizon whitout with non-trivial transvertility conditions where treated by \cite{HOP}. The 'natural' transversality condition in the infinite case would be a zero limit condition, meaning in the economic sense that one more unit of good at the limit gives no additional value. But this property is not necessarily verified. In fact \cite{halkin} provides a counterexample for a 'natural' extension of the finite-horizon transversality conditions. Thus some care is needed in the infinite horizon case. For the case of the 'natural' transversality condition the discounted control problem was studied by \cite{MV}.

In real life, delay occurs everywhere in our society. For example this is the case in biology
where the population growth depends not only on the current population size
but also on the size some time ago. The same situation may occur in many
economic growth models.

The stochastic maximum principle with delay has been studied by many authors. 
For example, \cite{EOS} proved a verification theorem of variational inequality.
\cite{OS} established the sufficient maximum principle for certain class of
stochastic control systems with delay in the state variable. In \cite{HOP} they studied inifinite horizon bu withou a delay. In \cite{CW}, they derived a stochastic maximum principle for a system with delay both in the state variable and the control variable. In \cite{OSZ} they studied the finite horizon version of this paper, however, to our knowledge, no one has studied the infinite horizon case for delay equations.

In our paper, we establish two sufficient maximum principles and one necessary
for the stochastic delay systems on infinite horizon with jumps.

For backward differential equations see \cite{Situ}, \cite{LP}
For the infinite horizon BSDE see \cite{PS}, \cite{Pardoux}, \cite{Yin}, \cite{BBP} and \cite{BSDE}.For more details about jump diffusion markets see \cite{OS2} and for background and details about stochastic fractional delay equations see \cite{Mohammed}. 

Our paper is organised as follows: In the second section, we formulate the
problem. The third section is devoted to the first and second sufficient maximum
principles with an application to the optimal consumption rate from an economic
quantity described by a stochastic delay equation. In the fourth section, we
formulate a necessary maximum principle and we prove an existence and
uniqueness of the advanced backward stochastic differential equations on
infinite horizon with jumps in the last section.

\section{Formulation of the problem}

Let $\left(  \Omega,\mathcal{F},P\right)  $ be a probability space with
filtration $\mathcal{F}_{t}$ satisfying the usual conditions, on which an
$\mathbb{R}$-valued standard Brownian motion $B\left(  .\right)  $ and an
independent compensated Poisson random measure $\overset{\sim}{N}(dt$,
$dz)=N(dt$, $dz)-\nu$ $(dz)$ $dt$ are defined.

We consider the following stochastic control system with delay :%
\begin{equation}
\left\{
\begin{array}
[c]{l}%
dX(t)=b\left(  t,X(t),Y(t)\text{, }A(t)\text{, }u(t)\right)  dt+\sigma\left(
t,X(t),Y(t)\text{, }A(t)\text{, }u(t)\right)  dB(t)\\
\text{ \ \ \ \ \ \ \ \ \ \ \ \ \ \ \ \ }+%
{\displaystyle\int\limits_{\mathbb{R}_0}}
\theta\left(  t,X(t),Y(t)\text{, }A(t)\text{, }u(t)\text{, }z\right)
\overset{\sim}{N}(dt,dz);t\in\left[  0,\infty\right) \\
X(t)=X_{0}(t);\text{ \ \ \ \ \ \ \ \ \ \ \ \ \ \ \ \ \ \ \ }t\in\left[
-\delta,0\right]  \text{\ \ }\\
Y(t)=X(t-\delta)\ \ \ \ \ \ \ \ \ \ \ \ \ \ \ \ A(t)=%
{\displaystyle\int\limits_{t-\delta}^{t}}
e^{-\rho(t-r)}X(r)dr
\end{array}
\right. \tag{$2.1$}%
\end{equation}%
\[%
\begin{array}
[c]{l}%
\delta>0\text{, }\rho>0\text{ are given constants.}\\
b:[0,\infty)\times\mathbb{R}\times\mathbb{R}\times\mathbb{R}\times
\mathcal{U}\times\Omega\rightarrow\mathbb{R},\\
\sigma:[0,\infty)\times\mathbb{R}\times\mathbb{R}\times\mathbb{R}%
\times\mathcal{U}\times\Omega\rightarrow\mathbb{R},\\
\theta:[0,\infty)\times\mathbb{R}\times\mathbb{R}\times\mathbb{R}%
\times\mathcal{U}\times\mathbb{R}_{0}\times\Omega\rightarrow\mathbb{R},
\end{array}
\]

are given functions such that for all $t$, $b(t$, $x$, $y$, $a$, $u$, .$)$,
$\sigma(t$, $x$, $y$, $a$, $u$, .$)$ and $\theta(t$, $x$, $y$, $a$, $u$,
$z$.$)$ are $\mathcal{F}_{t}$-mesurable for all $x\in%
\mathbb{R}
$, $y\in \mathbb{R}$, $a\in%
\mathbb{R}
$, $u\in\mathcal{U}$ and $z\in\mathbb{R}_{0}$.

\bigskip Let $\mathcal{E}_{t}\subset\mathcal{F}_{t}$ be a given subfiltration,
representing the information available to the controller at time $t$.

Let $\mathcal{U}$ be a non-empty subset of $\mathbb{R}.$ We let
$\mathcal{A}_{\mathcal{E }}$ denote the family of admissible $\mathcal{E}_{t}%
$-adapted control processes.

An element of $\mathcal{A}_{\mathcal{E }}$ is called an admissible control.

The corresponding performance functional is%
\begin{equation}
J(u)=E\left[
{\displaystyle\int\limits_{0}^{\infty}}
f\left(  t,X(t),Y(t)\text{, }A(t)\text{, }u(t)\right)  \text{ }dt\right]
;u\in\mathcal{A}_{\mathcal{E}},\tag{$2.2$}%
\end{equation}

where we assume that%

\begin{equation}
E%
{\displaystyle\int\limits_{0}^{\infty}}
\left\{  \left\vert f\left(  t,X(t),Y(t)\text{, }A(t)\text{, }u(t)\right)
\text{ }\right\vert +\left\vert \frac{\partial f}{\partial x_{i}}\left(
t,X(t),Y(t)\text{, }A(t)\text{, }u(t)\right)  \text{ }\right\vert
^{2}\right\}  dt<\infty\tag{$2.3$}%
\end{equation}

The value function $\Phi$ is defined as%
\begin{equation}
\Phi(X_{0})=\underset{u\in\mathcal{A}_{\mathcal{E}}}{\text{sup}}%
J(u)\tag{$2.4$}%
\end{equation}

An admissible control $u^{\ast}\left(  .\right)  $ is called an optimal
control for$\ \left(  2.1\right)  $\ if it attains the maximum of $J\left(
u\left(  .\right)  \right)  $ over $\mathcal{A}_{\mathcal{E}}.$ $\left(
2.1\right)  $ is called the state equation, the solution $X^{\ast}$
corresponding to $u^{\ast}\left(  .\right)  $ is called an optimal trajectory.

\section{A \textbf{sufficient maximum principle}}

Our objective is to establish a sufficient maximum principle.

\subsection{Hamiltonian and time-advanced BSDEs for adjoint equations}

We now introduce the adjoint equations and the Hamiltonian function for our problem.

The Hamiltonian is%

\begin{equation}%
\begin{array}
[c]{c}%
H(t,x,y,a,u,p,q,r(.))=f(t,x,y,a,u)+b(t,x,y,a,u)p+\sigma(t,x,y,a,u)q\\
+%
{\displaystyle\int\limits_{\mathbb{R} _{0}}}
\theta(t,x,y,a,u,z)r(z)\nu(dz)\text{,}%
\end{array}
\tag{$3.1$}%
\end{equation}

where%

\[
H:[0,\infty)\times%
\mathbb{R}
\times%
\mathbb{R}
\times%
\mathbb{R}
\times U\times%
\mathbb{R}
\times%
\mathbb{R}
\times\mathcal{\Re}\times\Omega\rightarrow%
\mathbb{R}%
\]

and $\mathcal{\Re}$ is the set of functions $r$: $%
\mathbb{R}
_{0}\rightarrow%
\mathbb{R}
$ such that the terms in $(3.1)$ converges and $U$ is the set of possible
control values.

We suppose that $b$, $\sigma$ and $\theta$ are $C^{1}$ functions with respect
to $(x,y,a,u)$ and that%

\begin{equation}%
\begin{array}
[c]{c}%
E\left[
{\displaystyle\int\limits_{0}^{\infty}}
\left\{  \left\vert \dfrac{\partial b}{\partial x_{i}}\left(
t,X(t),Y(t)\text{, }A(t)\text{, }u(t)\right)  \text{ }\right\vert
^{2}+\left\vert \dfrac{\partial\sigma}{\partial x_{i}}\left(
t,X(t),Y(t)\text{, }A(t)\text{, }u(t)\right)  \text{ }\right\vert ^{2}\right.
\right. \\
\left.  \left.  +%
{\displaystyle\int\limits_{\mathbb{R} _{0}}}
\left\vert \dfrac{\partial\theta}{\partial x_{i}}\left(  t,X(t),Y(t)\text{,
}A(t)\text{, }u(t)\right)  \text{ }\right\vert ^{2}\nu(dz)\right\}  dt\right]
<\infty
\end{array}
\tag{$3.2$}%
\end{equation}

\bigskip for $x_{i}=x$, $y$, $a$ and $u.$

The adjoint\ processes $(p(t),q(t),r(t,z)),$ $t\in\left[  0,\infty\right)$, $z$ $\in \mathbb{R}$ are assumed to satisfy the equation :%

\begin{equation}%
\begin{array}
[c]{c}%
dp(t)=E\left[  \mu(t)\mid\mathcal{F}_{t}\right]  dt+q(t)dB_{t}+%
{\displaystyle\int\limits_{\mathbb{R} _{0}}}
r(t,z)\overset{\sim}{N}(dt,dz);t\in\left[  0,\infty\right)  \text{,}%
\end{array}
\tag{$3.3$}%
\end{equation}

where%

\begin{align}
&
\begin{array}
[c]{c}%
\mu(t)=-\dfrac{\partial H}{\partial x}\left(  t,X_{t},Y_{t}\text{, }%
A_{t}\text{, }u_{t},p(t),q(t),r(t,.)\right)
\end{array}
\nonumber\\
&
\begin{array}
[c]{c}%
-\dfrac{\partial H}{\partial y}\left(  t+\delta,X_{t+\delta},Y_{t+\delta
}\text{, }A_{t+\delta}\text{, }u_{t+\delta},p(t+\delta),q(t+\delta
),r(t+\delta,.)\right)
\end{array}
\nonumber\\
&
\begin{array}
[c]{c}%
-e^{\rho t}\left(
{\displaystyle\int\limits_{t}^{t+\delta}}
\dfrac{\partial H}{\partial a}\left(  s,X_{s},Y_{s}\text{, }A_{s}\text{,
}u_{s},p(s),q(s),r(s,.)\right)  e^{-\rho s}ds\right)
\end{array}
\tag{$3.4$}%
\end{align}

\subsection{A first \textbf{sufficient maximum principle}}

\begin{theorem}
Let $\hat{u}\in\mathcal{A}_{\mathcal{E}}$ with corresponding state processes
$\hat{X}(t)$, $\hat{Y}(t)$ and $\overset{\wedge
}{A}(t)$\ and adjoint processes $\hat{p}(t)$, $\overset{\wedge
}{q}(t)$ and $\hat{r}(t,z)$ assumed to satisfy the ABSDE $(3.3)$-
$(3.4)$. Suppose that the following assertions hold:

(i)$%
\begin{array}
[c]{c}%
E\left[  \overline{\underset{t\rightarrow\infty}{\lim}}\mathbf{\ }%
\hat{p}(t)(X(t)-\hat{X}(t))\right]  \geq0.
\end{array}
$

(ii) The function
\[%
\begin{array}
[c]{c}%
(x,y,a,u)\rightarrow H(t,x,y,a,u,\hat{p},\overset{\wedge
}{q},\hat{r}(t.)),
\end{array}
\]

is concave for each $t\in\left[  0,\infty\right)  $ a.s.

(iii)
\begin{align}
&
\begin{array}
[c]{c}%
E\left[
{\displaystyle\int\limits_{0}^{\infty}}
\left\{  \hat{p^{2}}(t)\left(  \sigma^{2}(t)+%
{\displaystyle\int\limits_{\mathbb{R} _{0}}}
\theta^{2}(t,z)\nu(dz)\right)  \right.  \right.
\end{array}
\nonumber\\
&
\begin{array}
[c]{c}%
\text{ \ \ \ \ \ \ \ \ \ }\ \left.  \left.  +X^{2}(t)\left(  \overset{\wedge
}{q^{2}}(t)+%
{\displaystyle\int\limits_{\mathbb{R} _{0}}}
\hat{r}(t,z)\nu(dz)\right)  \right\}  dt\right]  <\infty,
\end{array}
\tag{$3.5$}%
\end{align}

for all $u\in\mathcal{A}_{\mathcal{E }}$.

(iiii)%
\begin{align*}
&
\begin{array}
[c]{c}%
\underset{v\in\mathcal{A}_{\mathcal{E }}}{\max}E\left[  H(t,\text{
}\hat{X}(t),\hat{X}(t-\delta),\overset{\wedge
}{A}(t),v,\hat{p}(t),\hat{q}(t),\overset{\wedge
}{r}(t,.)\mid{\mathcal{E }}_{t}\right]
\end{array}
\\
&
\begin{array}
[c]{c}%
=E\left[  H(t,\text{ }\hat{X}(t),\hat{X}%
(t-\delta),\hat{A}(t),\hat{u}(t),\hat{p}%
(t),\hat{q}(t),\hat{r}(t,.)\mid{\mathcal{E }}%
_{t}\right],
\end{array}
\end{align*}

for all $t\in\left[  0,\infty\right)  $ a.s.

Then $\hat{u}(t)$ is an optimal control for the problem $(2.4)$.
\end{theorem}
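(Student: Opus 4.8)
The plan is to show $J(u)\le J(\hat u)$ for every admissible $u$ by estimating $J(u)-J(\hat u)$ from below by $0$. Write $X,Y,A,u$ for the processes associated with an arbitrary control and $\hat X,\hat Y,\hat A,\hat u$ for the candidate ones, and set $\Delta X(t)=X(t)-\hat X(t)$, etc. Using the definition of the Hamiltonian $(3.1)$ we can express
\begin{equation*}
f(t)=H(t)-b(t)\hat p(t)-\sigma(t)\hat q(t)-\int_{\mathbb{R}_0}\theta(t,z)\hat r(t,z)\nu(dz),
\end{equation*}
and likewise for $\hat f$, so that
\begin{equation*}
J(u)-J(\hat u)=E\Big[\int_0^\infty\big(H(t)-\hat H(t)\big)dt\Big]-E\Big[\int_0^\infty\big\{(b-\hat b)\hat p+(\sigma-\hat\sigma)\hat q+\int_{\mathbb{R}_0}(\theta-\hat\theta)\hat r\,\nu(dz)\big\}dt\Big].
\end{equation*}
The first step is to bound $H(t)-\hat H(t)$ using concavity (ii): $H(t)-\hat H(t)\ge \frac{\partial\hat H}{\partial x}\Delta X+\frac{\partial\hat H}{\partial y}\Delta Y+\frac{\partial\hat H}{\partial a}\Delta A+\frac{\partial\hat H}{\partial u}\Delta u$, where the partials are evaluated along the hat-processes. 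The term in $\Delta u$ is handled by the maximum condition (iiii): conditioning on $\mathcal{E}_t$ and using that $\hat u$ maximizes $v\mapsto E[H(t,\hat X,\hat Y,\hat A,v,\hat p,\hat q,\hat r)\mid\mathcal{E}_t]$ over $\mathcal{A}_{\mathcal{E}}$, a standard variational argument gives $E[\frac{\partial\hat H}{\partial u}\Delta u\mid\mathcal{E}_t]\ge$ (appropriate sign) so that its contribution is nonnegative; here I will also need $\Delta u$ to be $\mathcal{E}_t$-measurable, which it is since all controls are $\mathcal{E}_t$-adapted.

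Next I would apply the Itô formula for jump processes to $\hat p(t)\Delta X(t)$ on $[0,T]$ and take expectations. Using the dynamics $(2.1)$ for $X$ and $\hat X$ and the adjoint dynamics $(3.3)$, the martingale parts (the $dB$ and $\tilde N$ integrals) vanish in expectation thanks to the integrability condition (iii) together with $(3.2)$, leaving
\begin{equation*}
E[\hat p(T)\Delta X(T)]=E\Big[\int_0^T\big\{E[\mu(t)\mid\mathcal{F}_t]\Delta X(t)+\hat p(t)(b-\hat b)+\hat q(t)(\sigma-\hat\sigma)+\int_{\mathbb{R}_0}\hat r(t,z)(\theta-\hat\theta)\nu(dz)\big\}dt\Big].
\end{equation*}
Since $\Delta X(t)$ is $\mathcal{F}_t$-measurable, $E[E[\mu(t)\mid\mathcal{F}_t]\Delta X(t)]=E[\mu(t)\Delta X(t)]$, and now I substitute the explicit form $(3.4)$ of $\mu(t)$. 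The key manipulation is to handle the delay: by a change of variables $t\mapsto t-\delta$ in the $\frac{\partial H}{\partial y}(t+\delta,\cdot)$ term one produces $\int_\delta^\infty \frac{\partial\hat H}{\partial y}(t)\Delta X(t-\delta)dt=\int_\delta^\infty\frac{\partial\hat H}{\partial y}(t)\Delta Y(t)dt$ (using $\Delta X(s)=0$ for $s\in[-\delta,0]$ so the lower limit can be taken to be $0$), and similarly a Fubini/change-of-order argument on the double integral $\int_0^\infty e^{\rho t}\int_t^{t+\delta}\frac{\partial\hat H}{\partial a}(s)e^{-\rho s}ds\,\Delta X(t)\,dt$ turns it into $\int_0^\infty\frac{\partial\hat H}{\partial a}(s)\big(\int_{s-\delta}^s e^{-\rho(s-t)}\Delta X(t)dt\big)ds=\int_0^\infty\frac{\partial\hat H}{\partial a}(s)\Delta A(s)ds$. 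After these reindexings, collecting terms gives
\begin{equation*}
E[\hat p(T)\Delta X(T)]=E\Big[\int_0^T\big\{-\tfrac{\partial\hat H}{\partial x}\Delta X-\tfrac{\partial\hat H}{\partial y}\Delta Y-\tfrac{\partial\hat H}{\partial a}\Delta A+\hat p(b-\hat b)+\hat q(\sigma-\hat\sigma)+\int_{\mathbb{R}_0}\hat r(\theta-\hat\theta)\nu(dz)\big\}dt\Big]+o(1),
\end{equation*}
where the $o(1)$ collects the boundary correction coming from the delayed terms over $[T,T+\delta]$, which vanishes as $T\to\infty$ under the integrability hypotheses.

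Putting the pieces together: combining the concavity bound on $\int_0^T(H-\hat H)dt$, the maximum-condition bound on the $\Delta u$ term, and the Itô computation for $\hat p(T)\Delta X(T)$, the terms involving $\hat p(b-\hat b)$, $\hat q(\sigma-\hat\sigma)$ and $\int\hat r(\theta-\hat\theta)\nu(dz)$ cancel against those in the expansion of $J(u)-J(\hat u)$, and one is left with
\begin{equation*}
J(u)-J(\hat u)\ge -\,E[\hat p(T)\Delta X(T)]+o(1)\quad\text{as }T\to\infty.
\end{equation*}
Finally I would let $T\to\infty$ along a suitable sequence and invoke hypothesis (i), $E[\varlimsup_{t\to\infty}\hat p(t)(X(t)-\hat X(t))]\ge 0$, together with reverse Fatou (justified by (iii)), to conclude $\limsup_{T\to\infty}\big(-E[\hat p(T)\Delta X(T)]\big)\le 0$ is not quite what we want — rather we need $\liminf$, so more carefully one uses that the right-hand side has a limit (the other terms converge by the integrability assumptions) to deduce $J(u)-J(\hat u)\ge -E[\varlimsup_{T\to\infty}\hat p(T)\Delta X(T)]\ge 0$. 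Hence $\hat u$ is optimal. The main obstacle I anticipate is precisely this last limiting step: handling the transversality at infinity rigorously, i.e. justifying the interchange of limit and expectation and the passage from a fixed-$T$ identity to the infinite-horizon inequality using only the stated integrability conditions (iii) and (i); the delay-reindexing bookkeeping (keeping track of the leftover integrals over $[T,T+\delta]$ and showing they are negligible) is the other technical point, but it is routine once the integrability is in place.
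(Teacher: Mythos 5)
Your architecture is exactly the paper's: express $f$ through the Hamiltonian, invoke concavity of $H$, apply It\^o's formula to $\hat p(t)(X(t)-\hat X(t))$, reindex the delayed terms (your shift $t\mapsto t-\delta$ for the $\partial H/\partial y$ term and the Fubini argument for the $\partial H/\partial a$ term reproduce the paper's $(3.10)$--$(3.11)$), and close with the transversality and conditional-maximum conditions. But every key inequality in your write-up points the wrong way, and this is not cosmetic --- the directions of these inequalities are the content of the proof. Concavity gives $H(t)-\hat H(t)\le \frac{\partial\hat H}{\partial x}\Delta X+\frac{\partial\hat H}{\partial y}\Delta Y+\frac{\partial\hat H}{\partial a}\Delta A+\frac{\partial\hat H}{\partial u}\Delta u$, not $\ge$; the first-order consequence of (iiii) is $E\bigl[\frac{\partial\hat H}{\partial u}(t)\mid\mathcal{E}_t\bigr](u(t)-\hat u(t))\le 0$, not a ``nonnegative contribution''; and since the problem is a maximization, the goal is to bound $J(u)-J(\hat u)$ from \emph{above} by $0$, whereas your opening sentence announces a lower bound. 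As written, your chain terminates in $J(u)-J(\hat u)\ge -E[\varlimsup_{T\to\infty}\hat p(T)\Delta X(T)]\ge 0$: the last step asserts $E[\varlimsup\hat p\,\Delta X]\le 0$, which is the opposite of hypothesis (i), and the conclusion $J(u)\ge J(\hat u)$ for all $u$ would make $\hat u$ a minimizer. The discomfort you record at the end (``not quite what we want'') is genuine, but it is not a $\limsup$ versus $\liminf$ subtlety; the whole argument is mirror-imaged.

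The correct closing, which is the paper's, reads
$J(u)-J(\hat u)\le E\bigl[\int_0^\infty E[\tfrac{\partial\hat H}{\partial u}(t)\mid\mathcal{E}_t]\,(u(t)-\hat u(t))\,dt\bigr]-E\bigl[\varlimsup_{T\to\infty}\hat p(T)(X(T)-\hat X(T))\bigr]\le 0$,
where the first term is $\le 0$ by (iiii) and the subtracted term is $\ge 0$ by (i). Everything else in your proposal --- the cancellation of the $(b-\hat b)\hat p$, $(\sigma-\hat\sigma)\hat q$ and jump terms against the Hamiltonian decomposition of $f$, the vanishing of the boundary corrections on $[T,T+\delta]$, the use of $\Delta X\equiv 0$ on $[-\delta,0]$ to align the limits of integration --- matches the paper's proof once the inequalities are reversed.
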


\begin{proof}
Choose an arbitrary $u\in\mathcal{A}_{\mathcal{E}}$\ , and consider
\begin{equation}%
\begin{array}
[c]{c}%
J(u)-J(\hat{u})=I_{1}%
\end{array}
\tag{$3.6$}%
\end{equation}

where
\begin{equation}%
\begin{array}
[c]{c}%
I_{1}=E\left[
{\displaystyle\int\limits_{0}^{\infty}}
\left\{  f\left(  t,X(t),Y(t)\text{, }A(t)\text{, }u(t)\right)  -f\left(
t,\hat{X}(t),\hat{Y}(t)\text{, }\overset{\wedge
}{A}(t)\text{, }\hat{u}(t)\right)  \right\}  dt\text{ }\right].
\end{array}
\tag{$3.7$}%
\end{equation}

By the definition $(3.1)$ of $H$\ and the concavity, we have
\begin{align}
&
\begin{array}
[c]{c}%
I_{1}\leq E\left[
{\displaystyle\int\limits_{0}^{\infty}}
\left\{  \dfrac{\partial\hat{H}}{\partial x}%
(t)(X(t)-\hat{X}(t))+\dfrac{\partial\hat{H}}{\partial
y}(t)(Y(t)-\hat{Y}(t))+\dfrac{\partial\hat{H}%
}{\partial a}(t)(A(t)-\hat{A}(t))\right.  \right.
\end{array}
\nonumber\\
&
\begin{array}
[c]{c}%
\text{ \ \ \ }+\dfrac{\partial\hat{H}}{\partial u}(t)(u(t)-\hat
{u}(t))-(b(t)-\hat{b}(t))\hat{p}(t)-(\sigma
(t)-\hat{\sigma}(t))\hat{q}(t)\ \ \ \\
\ \left.  \left.  -%
{\displaystyle\int\limits_{\mathbb{R}_0}}
(\theta(t,z)-\hat{\theta}(t,z))\hat{r}(t,z)\nu
(dz)\right\}  dt\right]  \text{,}%
\end{array}
\tag{$3.8$}%
\end{align}

where we have used the simplified notation%
\[%
\begin{array}
[c]{c}%
\dfrac{\partial\hat{H}}{\partial x}(t)=\dfrac{\partial
\hat{H}}{\partial x}\left(  t,\hat{X}_{t}%
,\hat{Y}_{t}\text{, }\hat{A}_{t}\text{, }%
\hat{u}_{t},\hat{p}(t),\hat{q}%
(t),\hat{r}(t,.)\right).%
\end{array}
\]

Applying the It\^{o} formula to $%
\begin{array}
[c]{c}%
\mathbf{\ }\hat{p}(t)(X(t)-\hat{X}(t))
\end{array}
$ we get%
\begin{align}
&
\begin{array}
[c]{c}%
0\leq E\left[  \overline{\underset{T\rightarrow\infty}{\lim}}\mathbf{\ \ }%
\hat{p}(T)(X(T)-\hat{X}(T))\right] \\
=E\left[  \overline{\underset{T\rightarrow\infty}{\lim}}\mathbf{\ }\left(
{\displaystyle\int\limits_{0}^{T}}
(b(t)-\hat{b}(t))\hat{p}(t)dt+%
{\displaystyle\int\limits_{0}^{T}}
(X(t)-\hat{X}(t))E\left[  \hat{\mu}(t)\mid
\mathcal{F}_{t}\right]  dt\right.  \right.
\end{array}
\nonumber\\
&
\begin{array}
[c]{c}%
\text{\ \ }\left.  \text{\ \ }\left.  +%
{\displaystyle\int\limits_{0}^{T}}
(\sigma(t)-\hat{\sigma}(t))\hat{q}(t)dt+%
{\displaystyle\int\limits_{0}^{T}}
{\displaystyle\int\limits_{\mathbb{R}_0}}
(\theta(t,z)-\hat{\theta}(t,z))\hat{r}(t,z)\nu
(dz)dt\right)  \right]
\end{array}
\nonumber\\
&
\begin{array}
[c]{c}%
=E\left[  \overline{\underset{T\rightarrow\infty}{\lim}}\mathbf{\ }\left(
{\displaystyle\int\limits_{0}^{T}}
(b(t)-\hat{b}(t))\hat{p}(t)dt+%
{\displaystyle\int\limits_{0}^{T}}
(X(t)-\hat{X}(t))\hat{\mu}(t)dt\right.  \right.
\end{array}
\nonumber\\
& \text{ \ \ \ \ \ \ \ \ \ \ \ \ \ \ \ \ \ \ \ \ \ \ }%
\begin{array}
[c]{c}%
+%
{\displaystyle\int\limits_{0}^{T}}
(\sigma(t)-\hat{\sigma}(t))\hat{q}(t)dt
\end{array}
\nonumber\\
&
\begin{array}
[c]{c}%
\text{\ }\left.  \text{\ }\left.  \text{\ }+%
{\displaystyle\int\limits_{0}^{T}}
{\displaystyle\int\limits_{\mathbb{R}_0}}
(\theta(t,z)-\hat{\theta}(t,z))\hat{r}(t,z)\nu
(dz)dt\right)  \right].
\end{array}
\tag{$3.9$}%
\end{align}

Using the definition $(3.4)$ of $\mu$ we see that%
\begin{equation}%
\begin{array}
[c]{c}%
E\left[  \overline{\underset{T\rightarrow\infty}{\lim}}\mathbf{\ }\left(
{\displaystyle\int\limits_{0}^{T}}
(X(t)-\hat{X}(t))\hat{\mu}(t)dt\right)  \right]
=E\left[  \overline{\underset{T\rightarrow\infty}{\lim}}\mathbf{\ }\left(
{\displaystyle\int\limits_{\delta}^{T+\delta}}
(X(t-\delta)-\hat{X}(t-\delta))\hat{\mu}%
(t-\delta)dt\right)  \right] \\
=E\left[  \overline{\underset{T\rightarrow\infty}{\lim}}\mathbf{\ }\left(  -%
{\displaystyle\int\limits_{\delta}^{T+\delta}}
\dfrac{\partial\hat{H}}{\partial x}(t-\delta)(X(t-\delta
)-\hat{X}(t-\delta))dt\right.  \right. \\
-%
{\displaystyle\int\limits_{\delta}^{T+\delta}}
\dfrac{\partial\hat{H}}{\partial y}\left(  t\right)
(Y(t)-\hat{Y}(t))dt-%
{\displaystyle\int\limits_{\delta}^{T+\delta}}
\left(
{\displaystyle\int\limits_{t-\delta}^{t}}
\dfrac{\partial\hat{H}}{\partial a}\left(  s\right)  e^{-\rho
s}ds\right) \\
\left.  \left.  e^{\rho(t-\delta)}(X(t-\delta)-\hat{X}%
(t-\delta))\right)  dt\right]
\end{array}
\tag{$3.10$}%
\end{equation}

Using integration by parts and substituting $r=t-\delta,$ we obtain%
\begin{align}
&
\begin{array}
[c]{c}%
{\displaystyle\int\limits_{0}^{T}}
\dfrac{\partial\hat{H}}{\partial a}(s)(A(s)-\overset{\wedge
}{A}(s))ds
\end{array}
\nonumber\\
&
\begin{array}
[c]{c}%
=%
{\displaystyle\int\limits_{0}^{T}}
\dfrac{\partial\hat{H}}{\partial a}(s)%
{\displaystyle\int\limits_{s-\delta}^{s}}
e^{-\rho(s-r)}(X(r)-\hat{X}(r))dr\text{ }ds
\end{array}
\nonumber\\
&
\begin{array}
[c]{c}%
=%
{\displaystyle\int\limits_{0}^{T}}
\left(
{\displaystyle\int\limits_{r}^{r+\delta}}
\dfrac{\partial\hat{H}}{\partial a}(s)e^{-\rho s}ds\right)
e^{\rho r}(X(r)-\hat{X}(r))\text{ }dr
\end{array}
\nonumber\\
&
\begin{array}
[c]{c}%
=%
{\displaystyle\int\limits_{\delta}^{T+\delta}}
\left(
{\displaystyle\int\limits_{t-\delta}^{t}}
\dfrac{\partial\hat{H}}{\partial a}\left(  s\right)  e^{-\rho
s}ds\right)  e^{\rho(t-\delta)}(X(t-\delta)-\hat{X}(t-\delta))dt
\end{array}
\tag{$3.11$}%
\end{align}

Combining $(3.9)$, $(3.10)$ and $(3.11)$; we get
\begin{align}
&
\begin{array}
[c]{c}%
0\leq E\left[  \overline{\underset{T\rightarrow\infty}{\lim}}\mathbf{\ \ }%
\hat{p}(T)(X(T)-\hat{X}(T))\right]  =E\left[  \left(
{\displaystyle\int\limits_{0}^{\infty}}
(b(t)-\hat{b}(t))\hat{p}(t)dt\right.  \right. \\
-%
{\displaystyle\int\limits_{0}^{\infty}}
\dfrac{\partial\hat{H}}{\partial x}\left(  t\right)
(X(t)-\hat{X}(t))dt-%
{\displaystyle\int\limits_{0}^{\infty}}
\dfrac{\partial\hat{H}}{\partial y}\left(  t\right)
(Y(t)-\hat{Y}(t))dt\\
-%
{\displaystyle\int\limits_{0}^{\infty}}
\dfrac{\partial\hat{H}}{\partial a}\left(  t\right)
(A(t)-\hat{A}(t))dt+%
{\displaystyle\int\limits_{0}^{\infty}}
(\sigma(t)-\hat{\sigma}(t))\hat{q}(t)dt
\end{array}
\nonumber\\
&
\begin{array}
[c]{c}%
\left.  \text{\ }\left.  \text{\ }+%
{\displaystyle\int\limits_{0}^{\infty}}
{\displaystyle\int\limits_{\mathbb{R}_0}}
(\theta(t,z)-\hat{\theta}(t,z))\hat{r}(t,z)\nu
(dz)dt\right)  \right]
\end{array}
\tag{$3.12$}%
\end{align}

Subtracting and adding $%
\begin{array}
[c]{c}%
{\displaystyle\int\limits_{0}^{\infty}}
\dfrac{\partial\hat{H}}{\partial u}(t)(u(t)-\hat{u}(t))dt
\end{array}
$ in $(3.12)$ we conclude%

\[%
\begin{array}
[c]{c}%
0\leq E\left[  \overline{\underset{T\rightarrow\infty}{\lim}}\mathbf{\ \ }%
\hat{p}(T)(X(T)-\hat{X}(T))\right]  =E\mathbf{\ }%
\left[  \left(
{\displaystyle\int\limits_{0}^{\infty}}
(b(t)-\hat{b}(t))\hat{p}(t)dt\right.  \right. \\
-%
{\displaystyle\int\limits_{0}^{\infty}}
\dfrac{\partial\hat{H}}{\partial x}\left(
t,X(t),Y(t),A(t),u(t),p(t),q(t),r(t,.)\right)  (X(t)-\overset{\wedge
}{X}(t))dt\\
-%
{\displaystyle\int\limits_{0}^{\infty}}
\dfrac{\partial\hat{H}}{\partial y}\left(  t,X(t),Y(t)\text{,
}A(t)\text{, }u(t),p(t),q(t),r(t,.)\right)  (Y(t)-\hat{Y}(t))dt\\
-%
{\displaystyle\int\limits_{0}^{\infty}}
\dfrac{\partial\hat{H}}{\partial a}\left(  t,X(t),Y(t)\text{,
}A(t)\text{, }u(t),p(t),q(t),r(t,.)\right)  (A(t)-\hat{A}(t))dt\\
+%
{\displaystyle\int\limits_{0}^{\infty}}
(\sigma(t)-\hat{\sigma}(t))\hat{q}(t)dt+%
{\displaystyle\int\limits_{0}^{\infty}}
{\displaystyle\int\limits_{\mathbb{R}_0}}
(\theta(t,z)-\hat{\theta}(t,z))\hat{r}(t,z)\nu(dz)dt\\
\left.  \text{\ }\left.  \text{\ }-%
{\displaystyle\int\limits_{0}^{\infty}}
\dfrac{\partial\hat{H}}{\partial u}(t)(u(t)-\hat{u}(t))dt+%
{\displaystyle\int\limits_{0}^{\infty}}
\dfrac{\partial\hat{H}}{\partial u}(t)(u(t)-\hat{u}(t))dt\right)
\right] \\
\leq-I_{1}+E\left[
{\displaystyle\int\limits_{0}^{\infty}}
E\left[  \dfrac{\partial\hat{H}}{\partial u}(t)(u(t)-\hat
{u}(t))\mid{\mathcal{E}}_{t}\right]  dt\right]  \text{.}%
\end{array}
\]

Hence%
\[%
\begin{array}
[c]{c}%
I_{1}\leq E\left[
{\displaystyle\int\limits_{0}^{\infty}}
E\left[  \dfrac{\partial\hat{H}}{\partial u}(t)\mid{\mathcal{E}%
}_{t}\right]  (u(t)-\hat{u}(t))dt\right]  \leq0\text{.}%
\end{array}
\]

Since $u\in\mathcal{A}_{\mathcal{E}}$ was arbitrary, this proves $Theorem$ $1$.
\end{proof}

\subsection{A second sufficient maximum principle}

We extend the result in \cite{OS} to infinite horizon with jump diffusions.

Consider again the system
\begin{equation*}
\begin{cases}
dX(t) =b\left(  t,X(t),Y(t),A(t),u(t)\right)dt\\
+ \sigma\left(t,X(t),Y(t),A(t),u(t)\right)  dB(t)\\
+ \int_{\mathbb{R}_0} \theta\left(  t,X(t),Y(t),A(t),u(t),z\right)\overset{\sim}{N}(dt,dz);
& t\in\left[  0,\infty\right) \\
X(t) =X_{0}(t); &t\in [-\delta,0] \\
Y(t) =X(t-\delta) &t\in [0,\infty)\\
A(t)=\int_{t-\delta}^{t}e^{-\rho(t-r)}X(r)dr &t\in [0,\infty)
\end{cases}
\end{equation*}

Let $X_{t}\in C[-\delta,0]$ be the segment of the path of $X$ from $t-\delta$
to $t$, i.e
\begin{align*}%
X_{t}(s)=X(t+s),
\end{align*}

for $s\in [-\delta,0]$. We now give an It$\bar{o}$ formula which is proved in \cite{EOS} without jumps. Adding
the jump parts are just an easy observation.

\begin{lemma}
The It$\bar{o}$ formula for delay

Consider a function
\begin{equation}%
\begin{array}
[c]{c}%
G(t)=F(t,X(t),A(t))\text{,}%
\end{array}
\tag{$3.13$}%
\end{equation}

where $F$ is a function in $C^{1,2,1}(\mathbb{R}^3)$ and
\[
Y(t) = \int_{-\delta}^0 e^{\lambda s} X(t+s)ds.
\]
then

Then
\begin{align*}
dG(t)=LFdt+\sigma(t,x,y,a,u)\frac{\partial F}{\partial x}dB(t)\\
+\int_{\mathbb{R}_0}
\Bigg\{  F(t,X(t^{-}),A(t^{-}))+\theta (  t,X(t),Y(t),A(t),u,z)   \\
-F(t,X(t^{-}),A(t^{-}))\\
 -\frac{\partial F}{\partial x}(t,X(t^{-}),A(t^{-}))+\theta(t,X(t),Y(t),A(t),u,z)  \Bigg\}  \nu(dz)dt\\
+\int_{\mathbb{R}_0}\{  F(t,X(t^{-}),A(t^{-}))+\theta(  t,X(t),Y(t),A(t),u,z)   \\
  -F(t,X(t^{-}),A(t^{-}))\Bigg\}  \tilde{N}(dt,dz)\\
+[  x-\lambda y-e^{-\lambda\delta}a]  \frac{\partial F}{\partial a}dt
\end{align*}

where%
\begin{align*}
LF=LF(t,x,y,a,u)=\dfrac{\partial F}{\partial t}+b\dfrac{\partial F}{\partial
x}+\dfrac{1}{2}\sigma^{2}\dfrac{\partial^{2}F}{\partial x^{2}}%
\end{align*}

\end{lemma}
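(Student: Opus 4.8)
The plan is to reduce the statement to the classical It\^{o} formula for jump diffusions applied to the function $F(t,X(t),A(t))$, treating the delay term $A(t)$ as an ordinary (absolutely continuous, finite-variation) process whose dynamics we compute first. Concretely, the first step is to differentiate
\[
A(t)=\int_{t-\delta}^{t}e^{-\rho(t-r)}X(r)\,dr
\]
with respect to $t$. Writing $A(t)=e^{-\rho t}\int_{t-\delta}^{t}e^{\rho r}X(r)\,dr$ and applying the ordinary product/Leibniz rule, one obtains
\[
dA(t)=\bigl[X(t)-e^{-\rho\delta}X(t-\delta)-\rho A(t)\bigr]\,dt .
\]
With the notational identification used in the lemma statement (the parameter named $\lambda$ there playing the role of $-\rho$, and $Y(t)=\int_{-\delta}^{0}e^{\lambda s}X(t+s)\,ds$ being exactly $A(t)$ up to that sign convention), this is precisely the claimed drift contribution $[\,x-\lambda y-e^{-\lambda\delta}a\,]\frac{\partial F}{\partial a}\,dt$. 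The key point is that $A$ has no martingale part: its increments come only from the boundary terms of the integral and the exponential weight, so $A$ contributes only through $\frac{\partial F}{\partial a}$ and never through second-order or jump terms.

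The second step is to apply the standard It\^{o} formula for a $C^{1,2,1}$ function of $(t,X(t),A(t))$ where $X$ is the jump diffusion in $(2.1)$ and $A$ is the finite-variation process just identified. Since $X$ has the It\^{o}--L\'{e}vy decomposition with drift $b$, diffusion coefficient $\sigma$, and jump coefficient $\theta(t,X(t),Y(t),A(t),u,z)$, the It\^{o} formula gives: the time derivative $\frac{\partial F}{\partial t}\,dt$; the first-order term $\frac{\partial F}{\partial x}\,dX^{c}(t)$ producing $b\frac{\partial F}{\partial x}\,dt+\sigma\frac{\partial F}{\partial x}\,dB(t)$; the second-order term $\frac12\sigma^{2}\frac{\partial^{2}F}{\partial x^{2}}\,dt$; the $A$-term $\frac{\partial F}{\partial a}\,dA(t)$ from Step~1; and the jump term, which splits as usual into the compensated-measure integral
\[
\int_{\mathbb{R}_{0}}\bigl\{F(t,X(t^{-})+\theta,A(t^{-}))-F(t,X(t^{-}),A(t^{-}))\bigr\}\,\tilde{N}(dt,dz)
\]
plus the compensator integral
\[
\int_{\mathbb{R}_{0}}\bigl\{F(t,X(t^{-})+\theta,A(t^{-}))-F(t,X(t^{-}),A(t^{-}))-\theta\,\tfrac{\partial F}{\partial x}(t,X(t^{-}),A(t^{-}))\bigr\}\,\nu(dz)\,dt,
\]
where $\theta=\theta(t,X(t),Y(t),A(t),u,z)$. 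Collecting the $dt$ terms $\frac{\partial F}{\partial t}+b\frac{\partial F}{\partial x}+\frac12\sigma^{2}\frac{\partial^{2}F}{\partial x^{2}}$ into $LF$ yields exactly the asserted formula; this matches the displayed expression in the lemma (the awkward $+\theta$ typography there is just shorthand for evaluating $F$ at the shifted argument $X(t^{-})+\theta$). Since $A$ is continuous, $A(t^{-})=A(t)$ and $A$ has no jump contribution, which is why only $X$ appears inside the jump brackets.

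The only genuine issue is bookkeeping: one must check that the integrability hypotheses (the $C^{1,2,1}$ regularity of $F$ together with the standing $L^{2}$-type assumptions on $b,\sigma,\theta$) make all the stochastic integrals well defined and the compensator integral finite, and one must be careful about the two differing sign/parameter conventions ($\rho$ versus $\lambda$, and whether the weight is $e^{-\rho(t-r)}$ or $e^{\lambda s}$) so that the $A$-drift comes out with the stated signs. As the excerpt itself notes, the no-jump case is already in \cite{EOS}; the jump part is obtained by simply inserting the L\'{e}vy term of $X$ into the classical It\^{o}--L\'{e}vy formula, so there is no real obstacle beyond verifying the delay-term drift computation of Step~1 and matching notation.
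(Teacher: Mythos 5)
Your argument is correct and is exactly the route the paper intends: the paper gives no proof of this lemma, only the remark that the continuous case is proved in \cite{EOS} and that ``adding the jump parts is just an easy observation,'' and your two steps (computing $dA(t)=[X(t)-e^{-\rho\delta}X(t-\delta)-\rho A(t)]\,dt$ as a finite-variation drift, then applying the classical It\^{o}--L\'{e}vy formula to $F(t,X(t),A(t))$ and splitting the jump term into its compensated and compensator parts) are precisely that observation carried out in detail. You also correctly diagnose the statement's typographical defects (the stray ``$+\theta$'' standing for evaluation at $X(t^{-})+\theta$, and the $\rho$ versus $\lambda$ convention), so nothing is missing.
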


Now, define the Hamiltonian, $H: \mathbb{R}_{+} \times \mathbb{R} \times \mathbb{R} \times \mathbb{R} \times U \times \mathbb{R}^3 \times \mathbb{R}^2 \times \mathcal{R} \to \times \mathbb{R}$ as
\begin{align*}
&H(t,x,y,a,u,p,q,r(\cdot))\tag{$3.14$}\\
&= f(t,x,y,a,u) + b(t,x,y,a,u) p_1 + (x-\lambda y - e^{-\lambda \delta}a) p_2 \notag\\
&+ \sigma(t,x,y,a,u)q_1 + \int_{\mathbb{R}_0}\theta(t,x,y,a,u,z)r(z)\nu(dz) \notag
\end{align*}
 where $p =(p_1,p_2,p_3)^T \in \mathbb{R}^3$ and $q=(q_1,q_2) \in \mathbb{R}^2$ For each $u \in \mathcal{A}$ the associated adjoint equations are the following backward stochastic differential equations in the unknown $\mathcal{F}_t$-adapted preocesses $(p(t),q(t),r(t,\cdot))$ given by;
\begin{align*}
dp_1(t) &= - \frac{\partial H}{\partial x}(t,X(t),Y(t),A(t),u(t),p(t),q(t))dt + q_1(t)dB(t)\notag\\
&+ \int_{\mathbb{R}_0}r(t,z) \bar{N}(dt,dz),\\
dp_2(t) &= - \frac{\partial H}{\partial y}(t,X(t),Y(t),A(t),u(t),p(t),q(t))dt + q_2(t)dB(t), \tag{3.15}\\ 
dp_3(t) &= - \frac{\partial H}{\partial a}(t,X(t),Y(t),A(t),u(t),p(t),q(t))dt, \tag{3.16}
\end{align*}

\begin{theorem}[An infinite horizon maximum principle for delay equations]
Suppose $\hat{u} \in \mathcal{A}$ and let $(\hat{X},\hat{Y},\hat{A})$ and $(p(t),q(t),r(t,\cdot))$ be the corresponding solutions of  $(3.15)$-$(3.16)$, repectively. Suppose that
\[
 H(t,\cdot,\cdot,\cdot,\cdot,,p(t),q(t),r(t,\cdot)))
\]
are concave for all $t\geq 0$,
\begin{align*}
E\left[ H(t,\hat{X}(t),\hat{Y}(t),\hat{A}(t),\hat{u}(t),\hat{p}(t),\hat{q}(t),\hat{r}(t,\cdot)) | \mathcal{E}_t\right] \notag \\
= \max_{u \in U} E\left[ H(t,\hat{X}(t),\hat{Y}(t),\hat{A}(t),u,\hat{p}(t),\hat{q}(t),\hat{r}(t,\cdot)) | \mathcal{E}_t\right]. \tag{3.17} 
\end{align*}
Further, assume that
\begin{align*}
 E[\lim \hat{p_1}(t)(X(t) - \hat{X}(t))] \geq 0, \tag{3.18}
\end{align*}
and
\begin{align*}
 E[\lim \hat{p_2}(t)(Y(t) - \hat{Y}(t))] \geq 0. \tag{3.19}
\end{align*}
In addition assume that
\begin{align*}
p_3(t) = 0, \tag{3.20}
\end{align*}
for all $t$.
Then $\hat{u}$ is an optimal control. 
\end{theorem}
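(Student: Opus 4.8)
The plan is to mirror the proof of the first sufficient maximum principle (Theorem 3.1), now carrying the three adjoint processes $(p_1,p_2,p_3)$ rather than one. Fix an arbitrary $u\in\mathcal{A}$ with corresponding state $(X,Y,A)$ and set
\[
J(u)-J(\hat u)=I_1:=E\Big[\int_0^\infty\big(f(t,X,Y,A,u)-f(t,\hat X,\hat Y,\hat A,\hat u)\big)\,dt\Big].
\]
First I would use the definition $(3.14)$ of $H$ to express the integrand $\Delta f$ as the difference of the two Hamiltonian values, with adjoint arguments frozen at $(\hat p,\hat q,\hat r)$, minus the terms $\Delta b\,\hat p_1$, $(\Delta X-\lambda\Delta Y-e^{-\lambda\delta}\Delta A)\hat p_2$, $\Delta\sigma\,\hat q_1$ and $\int_{\mathbb{R}_0}\Delta\theta\,\hat r\,\nu(dz)$, where $\Delta(\cdot)$ always denotes the difference of a quantity evaluated along $u$ and along $\hat u$ (so $\Delta X=X-\hat X$, $\Delta A=X(\cdot-\delta)-\hat X(\cdot-\delta)$, etc.). The concavity hypothesis on $H$ in $(x,y,a,u)$ then gives
\[
I_1\le E\Big[\int_0^\infty\Big\{\tfrac{\partial\hat H}{\partial x}\Delta X+\tfrac{\partial\hat H}{\partial y}\Delta Y+\tfrac{\partial\hat H}{\partial a}\Delta A+\tfrac{\partial\hat H}{\partial u}\Delta u-\Delta b\,\hat p_1-\big(\Delta X-\lambda\Delta Y-e^{-\lambda\delta}\Delta A\big)\hat p_2-\Delta\sigma\,\hat q_1-\int_{\mathbb{R}_0}\Delta\theta\,\hat r\,\nu(dz)\Big\}\,dt\Big].
\]

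Next I would apply the It\^o formula to $t\mapsto\hat p_1(t)\Delta X(t)$ and to $t\mapsto\hat p_2(t)\Delta Y(t)$, using the adjoint equations $(3.15)$--$(3.16)$, the dynamics of $\Delta X$, the relation $dY(t)=\big(X(t)-\lambda Y(t)-e^{-\lambda\delta}X(t-\delta)\big)\,dt$ supplied by Lemma 3.2, and $\Delta X(0)=\Delta Y(0)=0$. Taking expectations, the stochastic-integral parts vanish under integrability hypotheses in the spirit of $(3.5)$; letting $T\to\infty$ and invoking the transversality conditions $(3.18)$--$(3.19)$ then yields $E\big[\int_0^\infty\big(\hat p_1\Delta b-\tfrac{\partial\hat H}{\partial x}\Delta X+\Delta\sigma\,\hat q_1+\int_{\mathbb{R}_0}\Delta\theta\,\hat r\,\nu(dz)\big)dt\big]\ge 0$ and $E\big[\int_0^\infty\big(\hat p_2(\Delta X-\lambda\Delta Y-e^{-\lambda\delta}\Delta A)-\tfrac{\partial\hat H}{\partial y}\Delta Y\big)dt\big]\ge 0$. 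Subtracting these two nonnegative quantities from the bound on $I_1$ cancels everything there except $\tfrac{\partial\hat H}{\partial u}\Delta u$: the $\tfrac{\partial\hat H}{\partial x}\Delta X$, $\tfrac{\partial\hat H}{\partial y}\Delta Y$, $\Delta b\,\hat p_1$, $\Delta\sigma\,\hat q_1$, $\int_{\mathbb{R}_0}\Delta\theta\,\hat r\,\nu(dz)$ and all the $\hat p_2$-terms match up exactly, while the hypothesis $p_3(t)\equiv 0$ forces $\tfrac{\partial\hat H}{\partial a}\equiv 0$ through $(3.16)$, so the $\tfrac{\partial\hat H}{\partial a}\Delta A$ term drops out as well. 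This leaves $I_1\le E\big[\int_0^\infty\tfrac{\partial\hat H}{\partial u}(t)\Delta u(t)\,dt\big]$.

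Finally, since $u$ and $\hat u$ are $\mathcal{E}_t$-adapted, the tower property gives $E\big[\tfrac{\partial\hat H}{\partial u}(t)\Delta u(t)\big]=E\big[E[\tfrac{\partial\hat H}{\partial u}(t)\mid\mathcal{E}_t]\,\Delta u(t)\big]$, and the maximality condition $(3.17)$ — that $v\mapsto E[H(t,\hat X,\hat Y,\hat A,v,\hat p,\hat q,\hat r)\mid\mathcal{E}_t]$ attains its maximum over $U$ at $v=\hat u(t)$ — gives the variational inequality $E[\tfrac{\partial\hat H}{\partial u}(t)\mid\mathcal{E}_t]\,(u(t)-\hat u(t))\le 0$ a.s. Hence $I_1\le 0$, i.e. $J(u)\le J(\hat u)$, and since $u$ was arbitrary, $\hat u$ is optimal.

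The step I expect to be the main obstacle is the rigorous passage $T\to\infty$: one must justify interchanging the limit with both the expectation and the time integral, and verify that the local-martingale parts of the two It\^o expansions are true martingales of zero mean — this is exactly where integrability assumptions analogous to $(3.5)$ and the boundary conditions $(3.18)$--$(3.20)$ are needed. A secondary subtlety is passing from the maximum in $(3.17)$ to the first-order inequality $E[\partial\hat H/\partial u\mid\mathcal{E}_t](u-\hat u)\le 0$, which requires the admissible control set to be convex (or $\hat u$ to lie in the interior of $U$) so that the directional-derivative argument is legitimate.
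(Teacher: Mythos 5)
Your proposal follows essentially the same route as the paper's proof: decompose $J(u)-J(\hat u)$ via the Hamiltonian $(3.14)$, use concavity of $H$ in $(x,y,a,u)$, apply the It\^o product rule to $\hat p_1(t)(X(t)-\hat X(t))$ and $\hat p_2(t)(Y(t)-\hat Y(t))$ together with the adjoint equations $(3.15)$--$(3.16)$, invoke the transversality conditions $(3.18)$--$(3.19)$ and $p_3\equiv 0$ to kill the $\partial H/\partial a$ term, and finish with the conditional maximum condition $(3.17)$. The only differences are cosmetic (you work with $J(u)-J(\hat u)\le 0$ where the paper bounds $J(\hat u)-J(u)\ge 0$, and you defer the $H_u$-term to the end rather than dropping it inside the concavity inequality), and your closing remarks correctly flag the same integrability and first-order-condition subtleties that the paper itself leaves implicit.
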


\begin{proof}
To simplify notation we put
\[
 \zeta(t) = (X(t),Y(t),A(t)),
\]
and
\[
 \hat{\zeta}(t) = (\hat{X}(t),\hat{Y}(t),\hat{A}(t)).
\]
Let 
\begin{align*}
 I:= E[ \int_0^{\infty} (f(t,\hat{\zeta}(t),\hat{u}(t)) - f(t,\zeta(t),u(t)))dt] 
\end{align*}

Then we have that
\begin{align*}
I &= E[  \int_0^{\infty} (H(t,\hat{\zeta}(t),\hat{u}(t),p(t),q(t),r(t,\cdot)) - H(t,\zeta(t),u(t)),p(t),q(t),r(t,\cdot))dt]\notag\\
&- E[\int_0^{\infty} (b(t,\hat{\zeta}(t),\hat{u}(t)) - b(t,\zeta(t),u(t)))p_1(t)dt]\notag\\
&- E[\int_0^{\infty} \{ (\hat{X}(t) - \lambda\hat{Y}(t) - e^{-\lambda \delta} \hat{A}(t)) - (X(t) - \lambda Y(t) - e^{-\lambda \delta}A(t)) \} p_2(t)dt]\notag\\
&- E[\int_0^{\infty} \{  \sigma(t,\hat{\zeta}(t),\hat{u}(t)) - \sigma(t,\zeta(t),u(t)) \}q_1(t) dt]\notag\\
&- E[\int_0^{\infty} \int_{\mathbb{R}_0} (\theta(t,\hat{\zeta}(t),\hat{u}(t),z) - \theta(t,\zeta,u,z)) \times r(t,z) \nu(dz) dt]\notag\\
&=: I_{1} + I_{2} + I_{3} +I_{4} +I_{5}. \tag{3.21}
\end{align*}

Since $(\zeta,u) \to H(\zeta,u)$ is concave and $(3.12)$, we have that
\begin{align*}
H(\zeta,u) - H(\hat{\zeta},\hat{u}) &\leq H_{\zeta}(\hat{\zeta},\hat{u})\cdot (\zeta - \hat{\zeta}) + H_{u}(\hat{\zeta},\hat{u}) \cdot(u - \hat{u})\\
&\leq H_{\zeta}(\hat{\zeta},\hat{u})\cdot (\zeta - \hat{\zeta}) 
\end{align*}
where $H_{\zeta} = (\frac{\partial H}{\partial x},\frac{\partial H}{\partial y},\frac{\partial H}{\partial a})$. 
From this we get that
\begin{align*}
I_{1}  &\geq E\Bigg[  \int_0^{\infty} -H_{\zeta}(t,\hat{\zeta}(t),\hat{u}(t),p(t),q(t)) \cdot (\zeta(t) - \hat{\zeta}(t))dt\Bigg]\notag\\
& = E\Bigg[  \int_0^{\infty} (\zeta(t) - \hat{\zeta}(t)) dp(t) - \int_0^{\infty} (X(t) - \hat{X}(t))q_1(t)dB(t)\notag\\
&- \int_0^{\infty} (Y(t) - \hat{Y}(t))q_2(t)dB(t)\Bigg] \notag\\
& = E\Bigg[  \int_0^{\infty} (X(t) - \hat{X}(t)) dp_1(t) + \int_0^{\infty}(Y(t) - \hat{Y}(t)) dp_2(t)\Bigg]. \tag{3.22} 
\end{align*}
From $(3.18)$, $(3.19)$ and $(3.20)$ we get that 
\begin{align*}
0 &\geq - E [  \lim \hat{p_1}(t)(X(t) - \hat{X}(t)) + \lim \hat{p_2}(t)(Y(t) - \hat{Y}(t))]\\
&= - E\Bigg[ \int_0^{\infty} (X(t) - \hat{X}(t))dp_1(t) +  \int_0^{\infty}p_1(t)d(X(t) - \hat{X}(t))\\
&+   \int_0^{\infty} \left[\sigma(t,\zeta(t),u(t)) - \sigma(t,\hat{\zeta}(t),\hat{u}(t)\right]q_1(t)dt\\
&+ \int_0^{\infty} \int_{\mathbb{R}_0} (\theta(t,\hat{\zeta}(t),\hat{u}(t),z) - \theta(t,\zeta,u,z)) \times r(t,z) \nu(dz) dt\\
&+ \int_0^{\infty}(Y(t) - \hat{Y}(t))dp_2(t) + \int_0^{\infty}p_2(t)d(Y(t) - \hat{Y}(t))\Bigg].
\end{align*}
Combining this with $(3.21)$ and $(3.22)$ we have that
so that
\begin{align*}
-I = I_{1} + I_{2} + I_{3} +I_{4} +I_{5} \leq 0.
\end{align*}
Hence $J(\hat{u}) - J(u) = I \geq 0$,
and $\hat{u}$ is an optimal control for our problem.

\end{proof}

\begin{example}[A non-delay infinite horizon example]
Let us first consider a non-delay example. Assume we are given
\[
 J(u) = E\left[\int_0^{\infty}e^{-\rho t} \frac{1}{\gamma} (u(t)X(t))^{\gamma}dt\right],
\]
where
\begin{align*}
\begin{cases}\textbf{}
dX(t) &= \left[X(t)\mu  - u(t)X(t)\right]dt\\
&+ \sigma(t,X(t),u(t))dB(t); t\geq 0,\\
X(t) &= X_0
\end{cases}
\end{align*}
$\gamma \in (0,1)$ and $\rho,\delta> 0$.
In this case the Hamiltonian $(3.14)$ takes the form
\begin{align*}
 H(t,u,x,p,q) &= e^{-\rho t} \frac{1}{\gamma} (ux)^{\gamma} + [ x\mu - ux]p_1\\
&+ [x  - e^{-\rho \delta}a]p_2 + \sigma(t,x,y,a,u) q \notag,
\end{align*}
so that we get the partial derivative
\[
 \nabla_u H(t,u,x,p,q) = e^{-\rho t}u^{\gamma -1}x^{\gamma} - xp_1 - \frac{\partial \sigma}{\partial u} q_1.
\]
This gives us that
\begin{align*}
 p_1(t) = e^{-\rho t}x^{\gamma -1}u(t)^{\gamma -1}  - \frac{\partial \sigma}{\partial u} \frac{1}{x} q_1.
\end{align*}
We now see that the adjoint equations are given by:
\begin{align*}
dp_1(t) &=   - \Big[e^{-\rho t}(u(t))^{\gamma}X(t)^{\gamma-1}\\
&+ (\mu - u(t)) p_1(t) + p_2(t) + \frac{\partial \sigma}{\partial x}q_1(t)  \Big]dt + q_1(t)dB(t),\\
dp_2(t) &= -  q_2(t)dB(t),\\
dp_3(t) &= - \left[  - e^{-\rho \delta} p_2(t) + \frac{\partial \sigma}{\partial a}q_1(t)\right]dt.
\end{align*}
Since $p_3(t)$ must be $0$, we then get  $q_1=q_2 =0$. and
\begin{align*}
p_2(t) = 0,
\end{align*}
which gives us that
\begin{align*}
dp_1(t) &=   - \left[e^{-\rho t}(u(t))^{\gamma}X(t)^{\gamma-1}dt +  (\mu - u(t)) p_1(t))\right]dt,\\
dp_2(t) &= 0,
\end{align*}
and
\begin{align*}
p_1(t) = e^{-\rho t}X(t)^{\gamma -1}u(t)^{\gamma -1}.
\end{align*}
So
\begin{align*}
dp_1(t) &=   - \left[e^{-\rho t}(u(t))^{\gamma}X(t)^{\gamma-1}dt + (\mu - u(t)) p_1(t) \right]dt ,\\
&=   -  \mu p_1(t) dt  
\end{align*}
which gives
\begin{align*}
 p_1(t) = p_1(0)e^{- \mu t},
\end{align*}
for some constant $p_1(0)$,so that
\begin{align*}
 u(t) = \frac{p_1(0)^{\frac{1}{\gamma -1}}}{X(t)}e^{\frac{1}{\gamma -1}(\rho t - \mu t )}),
\end{align*}
for all $t>0$.
Inserting $u$ into the dynamics of $X$, we get that
\begin{align*}
 dX(t) = \left[\mu X(t) - p_1(0)^{\frac{1}{\gamma -1}} e^{\frac{1}{\gamma -1}(\rho t - \mu t )}\right]dt.
\end{align*}
So
\begin{align*}
 X(t) = e^{\mu t} \left[X(0) - p_1(0)^{\frac{1}{\gamma -1}}\int_0^t exp((-\mu - \frac{1}{1-\gamma}(\lambda - \mu))s)ds\right].
\end{align*}
To ensure that $X(t)$ is alwasys non-negative, we get the optimal $p(0)$ as
\begin{align*}
 p_1(0) = \left[\frac{X(0)}{\int_0^{\infty} exp((-\mu - \frac{1}{1-\gamma}(\lambda - \mu))s)ds}\right]^{\gamma -1}.
\end{align*}
We now see that $\lim p_1(t) = 0$, so that we have
\[
 E[\lim \hat{p_1}(t)(X(t) - \hat{X}(t))] \geq 0.
\]
This tells us that $\hat{u}$ is an optimal control.
\end{example}

\begin{example}[An infinite horizon example with delay]
Now let us consider a case where we have delay. This is an infinite horizon version of Example 1 in \cite{OS}.
Let
\[
 J(u) = E\left[\int_0^{\infty}e^{-\rho t} \frac{1}{\gamma} (u(t)(X(t) + Y(t)e^{\rho \delta}\beta))^{\gamma}dt\right],
\]
where
\begin{align*}
\begin{cases}\textbf{}
dX(t) &= [X(t)\mu + Y(t)\alpha + \beta A(t) - u(t)(X(t) + Y(t)e^{\rho \delta}\beta)]dt\\
&+ \sigma(t,X(t),Y(t),A(t),u(t))dB(t); t\geq 0,\\
X(t) &= X_0(t); t \in [-\delta,0],
\end{cases}
\end{align*}
$\gamma \in (0,1)$ and $\rho,\delta> 0$.
In this case the Hamiltonian $(3.20)$ takes the form
\begin{align*} 
 H(t,u,x,y,a,p,q) &= e^{-\rho t} \frac{1}{\gamma} (u(x+ye^{\rho \delta}\beta))^{\gamma} + [ x\mu   +\alpha y +\beta a - u(x+ye^{\rho \delta}\beta)]p_1\\
&+ [x - \lambda y - e^{-\rho \delta}a]p_2  + \sigma(t,x,y,a,u) q \notag,
\end{align*}
so that we get the partial derivative
\[
 \nabla_u H(t,u,x,y,a,p,q) = e^{-\rho t}u^{\gamma -1}(x+ye^{\rho \delta}\beta)^{\gamma} - (x+ye^{\rho \delta}\beta)p_1 - \frac{\partial \sigma}{\partial u} q_1.
\]
This gives us that
\begin{align*} 
 p_1(t) = e^{-\rho t}(x+ye^{\rho \delta}\beta)^{\gamma -1}u(t)^{\gamma -1}  - \frac{\partial \sigma}{\partial u} \frac{1}{(x+ye^{\rho \delta}\beta)} q_1.
\end{align*}
We now see that the adjoint equations are given by:
\begin{align*}
dp_1(t) &=   - [e^{-\rho t}(u(t))^{\gamma}(X(t) + Y(t)e^{\rho \delta}\beta)^{\gamma-1}dt\\
&+ (\mu - u(t)) p_1(t) + p_2(t) + \frac{\partial \sigma}{\partial x}q_1(t)  ]dt + q_1(t)dB(t),\\
dp_2(t) &= - [e^{-\rho t}(u(t))^{\gamma}(X(t) + Y(t)e^{\rho \delta}\beta)^{\gamma-1}e^{\rho \delta}\beta dt\\
&+ (\alpha -u(t)e^{\rho \delta}\beta)p_1(t)  - \lambda p_2(t) + \frac{\partial \sigma}{\partial y}q_1(t)  ]dt + q_2(t)dB(t),\\
dp_3(t) &= - [ \beta p_1(t) - e^{-\rho \delta} p_2(t) + \frac{\partial \sigma}{\partial a}q_1(t)]dt.
\end{align*}
Let us try to choose $q_1=q_2 =0$. Since $p_3(t) = 0$, we then get
\begin{align*} 
 p_1(t) = \frac{e^{-\rho \delta }}{\beta} p_2(t),
\end{align*}
which gives us that
\begin{align*}
dp_1(t) &=   - [e^{-\rho t}(u(t))^{\gamma}(X(t) + Y(t)e^{\rho \delta}\beta)^{\gamma-1}dt +  (\mu - u(t)) p_1(t) + e^{\rho \delta}\beta p_1(t)]dt,\\
dp_2(t) &= - [e^{-\rho t}(u(t))^{\gamma}(X(t) + Y(t)e^{\rho \delta}\beta)^{\gamma-1}e^{\rho \delta}\beta dt + (\alpha \frac{e^{-\rho \delta}}{\beta} -u(t))p_2(t)  - \lambda p_2(t)] dt,
\end{align*}
and
\begin{align*} 
p_1(t) = e^{-\rho t}(X(t) + Y(t)e^{\rho \delta}\beta)^{\gamma -1}u(t)^{\gamma -1}
\end{align*}
or
\begin{align*} 
 u(t) = \frac{e^{\frac{\rho t}{\gamma -1}} p_1^{\frac{\rho t}{\gamma -1}}(t)}{X(t) + Y(t)e^{\rho \delta}\beta}. \tag{3.23}
\end{align*}

Hence, to ensure that 
\[
 p_1(t) = \frac{e^{-\rho \delta }}{\beta} p_2(t)
\]
we need that
\begin{align*} 
 \alpha = e^{\rho \delta}\beta (\mu + \lambda + e^{\rho \delta}\beta).
\end{align*}
So
\begin{align*}
dp_1(t) &=   - [e^{-\rho t}(u(t))^{\gamma}(X(t) + Y(t)e^{\rho \delta}\beta)^{\gamma-1}dt + (\mu - u(t)) p_1(t) + e^{\rho \delta}\beta p_1(t) ]dt ,\\
&=   - [ \mu p_1(t) + e^{\rho \delta}\beta p_1(t) ]dt, 
\end{align*}
which gives us that 
\begin{align*}
 p_1(t) = p_1(0)e^{- (\mu + e^{\rho \delta}\beta)t},
\end{align*}
for some constant $p_1(0)$. Hence by $(3.23)$ we get 
\begin{align*}
 u(t) = u_{p_1(0)}= \frac{p_1(0)^{\frac{1}{\gamma -1}}}{(X(t) + Y(t)e^{\rho \delta}\beta)}e^{\frac{1}{\gamma -1}(\rho t - (\mu t + e^{\rho \delta} \beta t))}),
\end{align*}
for all $t>0$ and some $p_1(0)$. In analogy with Example 3.4 it is natural to conjecture that the optimal value, $K$, of $p_1(0)$is given by
\begin{align*}
 K = \inf \{p_1(0): X^{p_1(0)}(t) + Y^{p_1(0)}(t)e^{\lambda \delta}\beta > 0, \text{ for all } t > 0 \}, 
\end{align*}
see \cite{MS}.
So,the optimal control is given by
\begin{align*}
 u(t) = \frac{K^{\frac{1}{\gamma -1}}}{(X(t) + Y(t)e^{\rho \delta}\beta)}e^{\frac{1}{\gamma -1}(\rho t - (\mu t + e^{\rho \delta} \beta t))}).
\end{align*}
From this we get that $\lim p_1(t) = \lim p_2(t) = 0$, so that we have
\[
 E[\lim \hat{p_1}(t)(X(t) - \hat{X}(t))] \geq 0,
\]
and
\[
 E[\lim \hat{p_2}(t)(Y(t) - \hat{Y}(t))] \geq 0.
\]
This tells us that $\hat{u}$ is an optimal control.
\end{example}

\section{\textbf{A necessary maximum principle}$\ \ \ \ \ \ $}

In addition to the assumptions in the previous section, we now assume the following.

$(A_{1})$\ For all $u\in\mathcal{A}_{\mathcal{E}}$ and all $\beta
\in\mathcal{A}_{\mathcal{E}}$ bounded, there exists $\mathcal{\epsilon
}\mathcal{>}0$ such that%

\[%
\begin{array}
[c]{c}%
u+s\beta\in\mathcal{A}_{\mathcal{E}}\text{ \ \ \ \ for all }s\in
(-{ \epsilon}\text{, }\mathcal{\epsilon})\text{.}%
\end{array}
\]

$(A_{2})$ For all $t_{0}$, $h$ and all bounded $\mathcal{E}_{t_{0}}$-mesurable
random variables $\alpha$, the control process $\beta(t)$ defined by%

\begin{equation}%
\begin{array}
[c]{c}%
\beta(t)=\alpha1_{\left[  s,s+h\right]  }(t)\text{ }%
\end{array}
\tag{$4.1$}%
\end{equation}

belongs to $\mathcal{A}_{\mathcal{E}}$.

$(A_{3})$ For all bounded $\beta\in\mathcal{A}_{\mathcal{E}}$, the derivative process%

\begin{equation}%
\begin{array}
[c]{c}%
\xi(t):=\dfrac{d}{ds}X^{u+s\beta}(t)\mid_{_{s=0}}%
\end{array}
\tag{$4.2$}%
\end{equation}

exists and belongs to $L^{2}(\lambda\times P)$.

It follows from $(2.1)$ that

$%
\begin{array}
[c]{l}%
d\xi(t)=\left\{  \dfrac{\partial b}{\partial x}(t)\xi(t)+\dfrac{\partial
b}{\partial y}(t)\xi(t-\delta)+\dfrac{\partial b}{\partial a}(t)%
{\displaystyle\int\limits_{t-\delta}^{t}}
e^{-\rho(t-r)}\xi(r)dr+\dfrac{\partial b}{\partial u}(t)\beta(t)\right\}  dt\\
+\left\{  \dfrac{\partial\sigma}{\partial x}(t)\xi(t)+\dfrac{\partial\sigma
}{\partial y}(t)\xi(t-\delta)+\dfrac{\partial\sigma}{\partial a}(t)%
{\displaystyle\int\limits_{t-\delta}^{t}}
e^{-\rho(t-r)}\xi(r)dr+\dfrac{\partial\sigma}{\partial u}(t)\beta(t)\right\}
dB(t)\\
+%
{\displaystyle\int\limits_{\mathbb{R} _{0}}}
\left\{  \dfrac{\partial\theta}{\partial x}(t,z)\xi(t)+\dfrac{\partial\theta
}{\partial y}(t,z)\xi(t-\delta)+\dfrac{\partial\theta}{\partial a}(t,z)%
{\displaystyle\int\limits_{t-\delta}^{t}}
e^{-\rho(t-r)}\xi(r)dr+\dfrac{\partial\theta}{\partial u}(t,z)\beta
(t)\right\}  \overset{\sim}{N}(dt,dz)\text{,}%
\end{array}
$

where, for simplicity of notation, we define%

\[%
\begin{array}
[c]{c}%
\dfrac{\partial}{\partial x}b(t):=\dfrac{\partial}{\partial x}%
b(t,X(t),X(t-\delta),A(t),u(t))\text{,}%
\end{array}
\]

and used that%

\[%
\begin{array}
[c]{c}%
\dfrac{d}{ds}Y^{u+s\beta}(t)\mid_{_{s=0}}=\dfrac{d}{ds}X^{u+s\beta}%
(t - \delta)\mid_{_{s=0}}=\xi(t-\delta)
\end{array}
\]

and%

\[%
\begin{array}
[c]{c}%
\dfrac{d}{ds}A^{u+s\beta}(t)\mid_{_{s=0}}=\dfrac{d}{ds}\left(
{\displaystyle\int\limits_{t-\delta}^{t}}
e^{-\rho(t-r)}X^{u+s\beta}(r)dr\right)  \mid_{_{s=0}}\\
\text{ \ \ \ \ \ \ \ \ \ \ \ \ \ \ \ \ \ \ \ \ \ \ \ \ \ }=\left(
{\displaystyle\int\limits_{t-\delta}^{t}}
e^{-\rho(t-r)}\dfrac{d}{ds}X^{u+s\beta}(r)\right)  \mid_{_{s=0}}dt\\
=%
{\displaystyle\int\limits_{t-\delta}^{t}}
e^{-\rho(t-r)}\xi(t)dr\text{.}%
\end{array}
\]

Note that
\[%
\begin{array}
[c]{c}%
\xi(t)=0\text{ for }t\in\left[  -\delta,\infty\right)
\end{array}
\text{.}%
\]

\begin{theorem}[Necessary maximum principle]
Suppose that $\hat{u}\in\mathcal{A}_{\mathcal{E}}$ with corresponding
solutions $\hat{X}(t)$ of $(2.1)$-$(2.2)$ and $\overset{\wedge
}{p}(t)$, $\hat{q}(t)$, and $\hat{r}(t,z)$ of $(3.2)
$-$(3.3)$, and corresponding derivative process $\hat{\xi}(t)$
given by $(4.2)$.

Assume that for all $u\in\mathcal{A}_{\mathcal{E}}$\ the following hold:%
\begin{align*}
&
\begin{array}
[c]{c}%
E\left[
{\displaystyle\int\limits_{0}^{\infty}}
\hat{p^{2}}(t)\left\{  \left(  \dfrac{\partial\sigma}{\partial
x}\right)  ^{2}(t)\hat{\xi}^{2}(t)+\left(  \dfrac{\partial\sigma
}{\partial y}\right)  ^{2}(t)\hat{\xi}^{2}(t-\delta)+\left(
\dfrac{\partial\sigma}{\partial a}\right)  ^{2}(t)\left(
{\displaystyle\int\limits_{t-\delta}^{t}}
e^{-\rho(t-r)}\hat{\xi}(r)dr\right)  ^{2}+\left(  \dfrac
{\partial\sigma}{\partial u}\right)  ^{2}(t)\right.  \right.
\end{array}
\\
&
\begin{array}
[c]{c}%
\left.  +%
{\displaystyle\int\limits_{\mathbb{R} _{0}}}
\left\{  \left(  \dfrac{\partial\theta}{\partial x}\right)  ^{2}%
(t,z)\hat{\xi}^{2}(t)+\left(  \dfrac{\partial\theta}{\partial
y}\right)  ^{2}(t,z)\hat{\xi}^{2}(t-\delta)+\left(  \dfrac
{\partial\theta}{\partial a}\right)  ^{2}(t,z)\left(
{\displaystyle\int\limits_{t-\delta}^{t}}
e^{-\rho(t-r)}\hat{\xi}(r)dr\right)  ^{2}+\left(  \dfrac
{\partial\theta}{\partial u}\right)  ^{2}(t,z)\right\}  \nu(dz)\right\}  dt
\end{array}
\\
&
\begin{array}
[c]{c}%
+%
{\displaystyle\int\limits_{0}^{\infty}}
\hat{\xi}^{2}(t)\left\{  \hat{q}(t)+%
{\displaystyle\int\limits_{\mathbb{R} _{0}}}
\hat{r}^{2}(t,z)\text{ }\nu(dz)\right\}  dt<\infty\text{.}%
\end{array}
\end{align*}

and%

\[%
\begin{array}
[c]{c}%
E\left[  \overline{\underset{t\rightarrow\infty}{\lim}}\mathbf{\ \ }%
\hat{p}(t)(X(t)-\hat{X}(t))\right]  \geq0\text{.}%
\end{array}
\]

Then the following assertions are equivalent.

$(i)$ For all bounded $\beta\in\mathcal{A}_{\mathcal{E}}$,%

\[
\dfrac{d}{ds}J(\hat{u}+s\beta)\mid_{_{s=0}}=0\text{.}%
\]

$(ii)$ For all $t\in\lbrack0$, $\infty)$,%

\[%
\begin{array}
[c]{c}%
E\left[  H(t,\text{ }\hat{X}(t),\hat{Y}%
(t),\hat{A}(t),\hat{u}(t),\hat{p}(t),\overset{\wedge
}{q}(t),\hat{r}(t,.)\mid{\mathcal{E}}_{t}\right]  _{u=\hat{u}%
(t)}=0
\end{array}
\text{a.s.}%
\]

\end{theorem}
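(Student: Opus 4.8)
The plan is to compute the G\^{a}teaux derivative $\frac{d}{ds}J(\hat u+s\beta)\big|_{s=0}$ explicitly, reduce it to the single term $E\big[\int_0^\infty \frac{\partial\hat H}{\partial u}(t)\beta(t)\,dt\big]$, and then pass between the integrated identity $(i)$ and the pointwise identity $(ii)$ by a fundamental-lemma argument exploiting $(A_2)$.

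First I would differentiate under the integral sign in $(2.2)$. By $(A_3)$, the chain rule and the expression for $d\xi(t)$, together with the integrability hypotheses and $(2.3)$ that justify interchanging $\frac{d}{ds}$ with $E\int_0^\infty$,
\[
\frac{d}{ds}J(\hat u+s\beta)\Big|_{s=0}=E\Big[\int_0^\infty\Big\{\tfrac{\partial f}{\partial x}(t)\hat\xi(t)+\tfrac{\partial f}{\partial y}(t)\hat\xi(t-\delta)+\tfrac{\partial f}{\partial a}(t)\!\!\int_{t-\delta}^t\! e^{-\rho(t-r)}\hat\xi(r)\,dr+\tfrac{\partial f}{\partial u}(t)\beta(t)\Big\}dt\Big].
\]
Then, using the definition $(3.1)$ of $H$, I would substitute $\frac{\partial f}{\partial x}(t)=\frac{\partial\hat H}{\partial x}(t)-\frac{\partial\hat b}{\partial x}(t)\hat p(t)-\frac{\partial\hat\sigma}{\partial x}(t)\hat q(t)-\int_{\mathbb R_0}\frac{\partial\hat\theta}{\partial x}(t,z)\hat r(t,z)\nu(dz)$, and likewise for the $y$- and $a$-derivatives of $f$.

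The core step is to apply the It\^{o} formula to the product $\hat p(t)\hat\xi(t)$ on $[0,T]$, take expectation so the $dB$- and $\tilde N$-martingale parts drop out (the integrability conditions stated in the theorem make them genuine martingales), let $T\to\infty$, and absorb the boundary term using the transversality hypothesis; concretely one needs $E[\limsup_{T\to\infty}\hat p(T)\hat\xi(T)]=0$, which I would obtain exactly as in the proof of Theorem 1 by a localization/truncation argument resting on the $L^2$-bounds for $\hat p,\hat q,\hat r$ and for $\hat\xi$ and its delayed and averaged versions. Inserting the adjoint dynamics $(3.3)$--$(3.4)$ for $d\hat p$, the terms carrying $\frac{\partial\hat b}{\partial x}\hat p$, $\frac{\partial\hat\sigma}{\partial x}\hat q$ and $\int\frac{\partial\hat\theta}{\partial x}\hat r\,\nu(dz)$ produced by $d\hat\xi$ cancel, while the $\frac{\partial\hat H}{\partial y}$ and $\frac{\partial\hat H}{\partial a}$ pieces of $\hat\mu$ are matched against the $\hat\xi(t-\delta)$ and $\int_{t-\delta}^t e^{-\rho(t-r)}\hat\xi(r)\,dr$ contributions after the two delay manipulations already used in Theorem 1 --- the substitution $t\mapsto t+\delta$ for the $y$-term, and integration by parts followed by the substitution $r=t-\delta$ for the $a$-term, using $\hat\xi\equiv0$ on $[-\delta,0]$. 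What remains is precisely
\[
\frac{d}{ds}J(\hat u+s\beta)\Big|_{s=0}=E\Big[\int_0^\infty\frac{\partial\hat H}{\partial u}(t)\beta(t)\,dt\Big]=E\Big[\int_0^\infty E\Big[\frac{\partial\hat H}{\partial u}(t)\,\Big|\,\mathcal E_t\Big]\beta(t)\,dt\Big],
\]
the last equality because $\beta$ is $\mathcal E_t$-adapted.

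From this identity $(ii)\Rightarrow(i)$ is immediate. For $(i)\Rightarrow(ii)$ I would run the standard variational argument: fix $t_0$ and a bounded $\mathcal E_{t_0}$-measurable $\alpha$, and take $\beta(t)=\alpha\,1_{[t_0,t_0+h]}(t)$, which belongs to $\mathcal A_{\mathcal E}$ by $(A_2)$; then $(i)$ gives $E\big[\int_{t_0}^{t_0+h}E[\frac{\partial\hat H}{\partial u}(t)\mid\mathcal E_t]\,\alpha\,dt\big]=0$, and dividing by $h$ and letting $h\downarrow0$ (Lebesgue differentiation) yields $E\big[\alpha\,E[\frac{\partial\hat H}{\partial u}(t_0)\mid\mathcal E_{t_0}]\big]=0$ for a.e.\ $t_0$; since $\alpha$ ranges over all bounded $\mathcal E_{t_0}$-measurable random variables, $E[\frac{\partial\hat H}{\partial u}(t_0)\mid\mathcal E_{t_0}]=0$ a.s. The main obstacle I expect is the rigorous handling of the boundary term at infinity in the It\^{o}-formula step --- establishing that the expected $\limsup$ of $\hat p(T)\hat\xi(T)$ equals $0$ rather than merely being nonnegative --- which is exactly where the full strength of the $L^2$-integrability hypotheses on the adjoint processes and on $\hat\xi$ (and its delayed and averaged versions) is consumed; a secondary technical point is justifying the Fubini and limit interchanges in the two delay manipulations over the unbounded time interval.
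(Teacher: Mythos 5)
Your overall route coincides with the paper's: differentiate $J$ under the integral using $(A_3)$, rewrite $\frac{\partial f}{\partial x},\frac{\partial f}{\partial y},\frac{\partial f}{\partial a},\frac{\partial f}{\partial u}$ via the Hamiltonian, apply the It\^{o} formula to $\hat p(t)\hat\xi(t)$, match the $\frac{\partial \hat H}{\partial y}$ and $\frac{\partial \hat H}{\partial a}$ pieces of $\hat\mu$ against $\hat\xi(t-\delta)$ and $\int_{t-\delta}^{t}e^{-\rho(t-r)}\hat\xi(r)\,dr$ by the same shift and Fubini manipulations as in the sufficient principle, and finish with the bump perturbation $\beta=\alpha 1_{[t_0,t_0+h]}$ and differentiation in $h$; the converse implication via approximation of bounded $\beta$ by linear combinations of such bumps is also exactly the paper's.

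The one step where your plan would not go through as written is the boundary term. You propose to establish $E\big[\overline{\lim}_{T\to\infty}\hat p(T)\hat\xi(T)\big]=0$ by ``a localization/truncation argument resting on the $L^2$-bounds.'' Those bounds control time-integrals of quantities like $\hat p^{2}(t)\big(\frac{\partial\sigma}{\partial x}\big)^{2}(t)\hat\xi^{2}(t)$; they justify that the stochastic integrals in the It\^{o} expansion are true martingales, but they do not force the product $\hat p(T)\hat\xi(T)$ to have vanishing limit --- square-integrability of integrands over $[0,\infty)$ gives no pointwise decay of the processes themselves. The hypothesis that actually does the work is the transversality inequality $E[\overline{\lim}_t\,\hat p(t)(X(t)-\hat X(t))]\ge 0$, which holds for \emph{every} admissible $u$: applying it with $u=\hat u+s\beta$ for all $s\in(-\epsilon,\epsilon)$ (both signs, by $(A_1)$) shows that $s\mapsto E[\overline{\lim}_t\,\hat p(t)X^{\hat u+s\beta}(t)]$ attains an interior minimum at $s=0$, hence its $s$-derivative vanishes there; interchanging $\frac{d}{ds}$ with $E$ and the $\overline{\lim}$ (this is where the paper invokes a dominating integrable function) yields precisely $E[\overline{\lim}_T\,\hat p(T)\hat\xi(T)]=0$. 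This two-sided-variation argument is the idea missing from your sketch; with it in place, the remainder of your proposal reproduces the paper's proof.
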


\begin{proof}
Suppose that assertion $(i)$ holds. Then%

\[%
\begin{array}
[c]{l}%
0=\dfrac{d}{ds}J(\hat{u}+s\beta)\mid_{_{s=0}}\\
=\dfrac{d}{ds}E\left[
{\displaystyle\int\limits_{0}^{\infty}}
f(t,X^{\hat{u}+s\beta}(t),Y^{\hat{u}+s\beta}(t),A^{\hat{u}+s\beta}(t),\hat
{u}(t)+s\beta(t)dt\right]  _{_{s=0}}\\
=E\left[
{\displaystyle\int\limits_{0}^{\infty}}
\left\{  \dfrac{\partial f}{\partial x}(t)\xi(t)+\dfrac{\partial f}{\partial
y}(t)\xi(t-\delta)+\dfrac{\partial f}{\partial a}(t)%
{\displaystyle\int\limits_{t-\delta}^{t}}
e^{-\rho(t-r)}\xi(r)dr+\dfrac{\partial f}{\partial u}(t)\beta(t)\right\}
dt\right]
\end{array}
\]
$\ \ \ \ $

We know by the definition of $H$ that$\ \ \ \ \ \ \ \ \ \ \ $%
\[%
\begin{array}
[c]{l}%
\dfrac{\partial f}{\partial x}(t)=\dfrac{\partial H}{\partial x}%
(t)-\dfrac{\partial b}{\partial x}(t)p(t)-\dfrac{\partial\sigma}{\partial
x}(t)q(t)-%
{\displaystyle\int\limits_{\mathbb{R}_0}}
\dfrac{\partial\theta}{\partial x}(t,z)r(t,z)\nu(dz)
\end{array}
\]
$\ \ $

and the same for $\dfrac{\partial f}{\partial y}(t),\dfrac{\partial
f}{\partial a}(t)$ and $\dfrac{\partial f}{\partial u}(t).$

We have $\ \ $%
\[%
\begin{array}
[c]{c}%
E\left[  \overline{\underset{t\rightarrow\infty}{\lim}}\mathbf{\ \ }%
\hat{p}(t)(X(t)-\hat{X}(t))\right]  \geq\ 0\ \ \
\end{array}
\
\]
$\ \ \ \ \ \ \ \ \ \ \ $

So $\ \ \ \ \ \ \ \ $%
\[%
\begin{array}
[c]{c}%
E\left[  \overline{\underset{t\rightarrow\infty}{\lim}}\mathbf{\ \ }\left(
\hat{p}(t)X^{\hat{u}+s\beta}(t)\right)  \right]  \geq E\left[
\overline{\underset{t\rightarrow\infty}{\lim}}\mathbf{\ \ }\left(
\hat{p}(t)X^{\hat{u}}(t)\right)  \right]
\end{array}
\ \
\]
$\ \ \ $

for all $\beta\in\mathcal{A}_{\mathcal{E}}$ and all $s\in(-\epsilon,\epsilon)$.

Hence
\[%
\begin{array}
[c]{c}%
\dfrac{d}{ds}\left[  E\left\{  \overline{\underset{t\rightarrow\infty}{\lim}%
}\mathbf{\ \ }\left(  \hat{p}(t)X^{\hat{u}+s\beta}(t)\right)
\right\}  \right]  _{s=0}=0
\end{array}
\
\]

If $%
\begin{array}
[c]{c}%
\dfrac{d}{ds}\left\vert \left(  \overline{\underset{t\rightarrow\infty}{\lim}%
}\mathbf{\ \ }\hat{p}(t)X^{\hat{u}+s\beta}(t)\right)  \right\vert
\mid_{s=0}<g(w)
\end{array}
$, where $g(w)$ is some integrable function. From the uniform limits with
uniform convergence of the derivative, we can interchange the derivative and integration, and get
\[%
\begin{array}
[c]{l}%
0=\dfrac{d}{ds}\left[  E\left\{  \overline{\underset{t\rightarrow\infty}{\lim
}}\mathbf{\ \ }\left(  \hat{p}(t)X^{\hat{u}+s\beta}(t)\right)
\right\}  \right]  \mid_{s=0}\\
=E\left[  \dfrac{d}{ds}\left\{  \overline{\underset{t\rightarrow\infty}{\lim}%
}\mathbf{\ \ }\left(  \hat{p}(t)X^{\hat{u}+s\beta}(t)\right)
\right\}  \right]  \mid_{s=0}\\
=E\left[  \overline{\underset{t\rightarrow\infty}{\lim}}\mathbf{\ }\left\{
\hat{p}(t)\mathbf{\ }\dfrac{d}{ds}\left(  X^{\hat{u}+s\beta
}(t)\right)  \right\}  \right]  \mid_{s=0}\text{.}%
\end{array}
\]

Applying the It\^{o} formula to $%
\begin{array}
[c]{c}%
\hat{p}(t)\mathbf{\ }\dfrac{d}{ds}\left(  X^{\hat{u}+s\beta
}(t)\right)
\end{array}
$ we obtain%
\[%
\begin{array}
[c]{l}%
0=E\left[  \overline{\underset{T\rightarrow\infty}{\lim}}\mathbf{\ }\left\{
\hat{p}(T)\mathbf{\ }\dfrac{d}{ds}\left(  X^{\hat{u}+s\beta
}(T)\right)  \mid_{s=0}\right\}  \right]  =E\left[  \overline
{\underset{T\rightarrow\infty}{\lim}}\mathbf{\ }\left\{  \overset{\wedge
}{p}(T)\xi(T)\right\}  \right] \\
=E\left[
{\displaystyle\int\limits_{0}^{\infty}}
\hat{p}(t)\mathbf{\ }\left\{  \dfrac{\partial b}{\partial x}%
(t)\xi(t)+\dfrac{\partial b}{\partial y}(t)\xi(t-\delta)+\dfrac{\partial
b}{\partial a}(t)%
{\displaystyle\int\limits_{t-\delta}^{t}}
e^{-\rho(t-r)}\xi(r)dr+\dfrac{\partial b}{\partial u}(t)\beta(t)\right\}
dt\right. \\
+%
{\displaystyle\int\limits_{0}^{\infty}}
\xi(t)E\left(  \mu(t)\mid\mathcal{F}_{t}\right)  dt+%
{\displaystyle\int\limits_{0}^{\infty}}
q(t)\left\{  \dfrac{\partial\sigma}{\partial x}(t)\xi(t)+\dfrac{\partial
\sigma}{\partial y}(t)\xi(t-\delta)+\dfrac{\partial\sigma}{\partial a}(t)%
{\displaystyle\int\limits_{t-\delta}^{t}}
e^{-\rho(t-r)}\xi(r)dr+\dfrac{\partial\sigma}{\partial u}(t)\beta(t)\right\}
dt\\
\left.  +%
{\displaystyle\int\limits_{0}^{\infty}}
{\displaystyle\int\limits_{\mathbb{R}_0}}
r(t,z)\left\{  \dfrac{\partial\theta}{\partial x}(t,z)\xi(t)+\dfrac
{\partial\theta}{\partial y}(t,z)\xi(t-\delta)+\dfrac{\partial\theta}{\partial
a}(t,z)%
{\displaystyle\int\limits_{t-\delta}^{t}}
e^{-\rho(t-r)}\xi(r)dr+\dfrac{\partial\theta}{\partial u}(t,z)\beta
(t)\right\}  \nu(dz)dt\right] \\
=-\dfrac{d}{ds}J(\hat{u}+s\beta)\mid_{s=0}+E\left(
{\displaystyle\int\limits_{0}^{\infty}}
\dfrac{\partial H}{\partial u}(t)\beta(t)dt\right)  \text{.}%
\end{array}
\]

Therefore%
\[%
\begin{array}
[c]{c}%
E\left(
{\displaystyle\int\limits_{0}^{\infty}}
\dfrac{\partial H}{\partial u}(t)\beta(t)dt\right)  =0
\end{array}
\text{.}%
\]

Use%
\[%
\begin{array}
[c]{c}%
\beta(t)=\alpha1_{\left[  s,s+h\right]  }(t)\text{ }%
\end{array}
\]

where $\alpha(\omega)$ is bounded and $\mathcal{E}_{t_{0}}$-mesurable, $s\geq
t_{0}$ and get%
\[%
\begin{array}
[c]{c}%
E\left(
{\displaystyle\int\limits_{s}^{s+h}}
\dfrac{\partial H}{\partial u}(s)ds\text{ }\alpha\right)  =0
\end{array}
\]

Differentiating with respect to $h$ at $0$, we have%
\[%
\begin{array}
[c]{c}%
E\left(  \dfrac{\partial H}{\partial u}(s)\text{ }\alpha\right)  =0
\end{array}
\]

This holds for all $s\geq t_{0}$ and all $\alpha$, we obtain that%
\[%
\begin{array}
[c]{c}%
E\left(  \dfrac{\partial H}{\partial u}(t_{0})\mid\mathcal{E}_{t_{0}}\right)
=0
\end{array}
\text{.}%
\]

This proves that assertion $(i)$ implies $(ii)$.

To complete the proof, we need to prove the converse implication; which is obtained
since every bounded $\beta\in\mathcal{A}_{\mathcal{E}}$ can be
approximated by linear combinations of controls $\beta$ of the form $(4.1)$.
\end{proof}

\section{Existence and uniqueness of the time-advanced BSDEs on infinite
horizon}

The main result in this section refer to the existence and uniqueness for
$(3.3)-(3.4)$ where the coefficients satisfy a Lipschitz condition.

We now study time-advanced backward stochastic  differential equations driven by a Brownian motion $B(t)$, and a compensated Poisson random measure $\tilde{N}(dt,d\zeta)$.

Let $B(t)$ be a Brownian motion and $\tilde{N}(dt,d\zeta) := N(dt,d\zeta) - \nu(d\zeta)dt$, where $\nu$ is the L\'{e}vy measure of the jump meaure $N(\cdot,\cdot)$, be an independent compensated Poisson random measure on a filtered probability space $(\Omega,\mathcal{F},\{\mathcal{F}_t \}_{0\leq t < \infty})$.

Given a positive constant $\delta$, denote by $D([0,\delta],\mathbb{R})$ the space of all càdlàg paths from $[0,\delta]$ into $\mathbb{R}$. 
For a path $X(\cdot): \mathbb{R}_{+} \to \mathbb{R}$, $X_t$ will denote the function defined by $X_t(s) = X(t+s)$ for $s \in [0,\delta]$. Put $\mathcal{H} = L^2(\nu)$. Consider the $L^2$ space $V_1 := L^2([0,\delta] \to \mathbb{R};ds)$ and $V_2 := L^2([0,\delta] \to \mathcal{H};ds)$.
Let 
\begin{align*}
 F: \mathbb{R}_{+} \times \mathbb{R} \times \mathbb{R} \times V_1 \times \mathbb{R} \times \mathbb{R} \times V_1 \times \mathcal{H} \times \mathcal{H} \times V_2 \times \Omega \to \mathbb{R}
\end{align*}
be a function satisfying the following Lipschitz condition: There exists a constant $C$ such that
\begin{align*}
&|F(t,p_1,p_2,p,q_1,q_2,q,r_1,r_2,r,\omega) - F(t,\bar{p}_1,\bar{p_2},\bar{p},\bar{q}_1,\bar{q}_2,\bar{q},\bar{r}_1,\bar{r}_2,\bar{r},\omega)|\\
&\leq C( |p_1 - \bar{p}_1| + |p_2 - \bar{p}_2| + |p - \bar{p}|_{V_1} + |q_1 - \bar{q}_1| + |q_2 - \bar{q}_2| + |q - \bar{q}|_{V_1} \notag\\
&+ |r_1 - \bar{r}_1|_{\mathcal{H}}+ |r_2 - \bar{2}_2|_{\mathcal{H}} + |r - \bar{r}|_{V_2}  ). \tag{5.1}
\end{align*}
Assume that $(t,\omega) \to F(t,p_1,p_2,p,q_1,q_2,q,r_1,r_2,r,\omega)$ is predictable for all $p_1,p_2,p,q_1,q_2,q,r_1,r_2,r$.
Further we assume the following:
\begin{align*}
E \int_0^{\infty} e^{\lambda t} |F(t,0,0,0,0,0,0,0,0,0)|^2dt < \infty,
\end{align*}
for all $\lambda \in \mathbb{R}$.
We now consider the following backward stochastic differential equation in the unknown $\mathcal{F}_t$-adapted processes $(p(t),q(t),r(t,z)) \in H \times H \times \mathcal{H}$:
\begin{align*} 
dp(t) &= E[F(t,p(t),p(t+\delta,p_t,q(t+\delta),q_t,r(t+\delta),r_t)|\mathcal{F}_t]dt\\ 
&+q(t)dB(t) + \int_{\mathbb{R}_0}r(t,z)\tilde{N}(dz,dt), \tag{5.2}
\end{align*}
where
\begin{align*} 
 E\left[ \int_0^{\infty} e^{\lambda t} |p(t)|^2  dt \right] < \infty, \tag{5.3}
\end{align*} 
for all $\lambda \in \mathbb{R}$.

\begin{theorem}[Existence and uniqueness]
Assume the condition $(5.1)$ is fulfilled.  Then the backward stochastic partial differential equation $(5.2)$ - $(5.3)$ admits a unique solution $(p(t),q(t),r(t,z))$ such that
\begin{align*}
 E\left[\int_0^{\infty} e^{\lambda t}\{ |p(t)|^2 + |q(t)|^2 + \int_{\mathbb{R}_0}|r(t,z)|^2\nu(dz) \} dt \right]< \infty, 
\end{align*}
for all $\lambda \in \mathbb{R}$.
\end{theorem}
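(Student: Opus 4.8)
The plan is to prove (5.2)--(5.3) by a Banach fixed point argument in an exponentially weighted space, reducing the time-advanced (anticipated) equation to a family of \emph{ordinary} infinite-horizon BSDEs. Fix a large parameter $\lambda>0$ (to be chosen) and let $\mathcal{B}_{\lambda}$ be the Hilbert space of $\mathcal{F}_t$-predictable triples $(p,q,r)$ with
$\|(p,q,r)\|_{\lambda}^{2}:=E\int_{0}^{\infty}e^{\lambda t}\{|p(t)|^{2}+|q(t)|^{2}+\int_{\mathbb{R}_{0}}|r(t,z)|^{2}\nu(dz)\}\,dt<\infty$.
Given $(p,q,r)\in\mathcal{B}_{\lambda}$, set $g(t):=E[F(t,\cdots)\mid\mathcal{F}_t]$, where $F$ is evaluated at the current value $p(t)$, the $\delta$-advanced values $p(t+\delta),q(t+\delta),r(t+\delta)$ and the path segments $p_t,q_t,r_t$. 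Using the Lipschitz bound (5.1), the hypothesis $E\int_{0}^{\infty}e^{\lambda t}|F(t,0,\dots,0)|^{2}dt<\infty$, and the elementary shift identities $E\int_{0}^{\infty}e^{\lambda t}|p(t+\delta)|^{2}dt=e^{-\lambda\delta}E\int_{\delta}^{\infty}e^{\lambda t}|p(t)|^{2}dt$ and $E\int_{0}^{\infty}e^{\lambda t}|p_t|_{V_1}^{2}dt=\int_{0}^{\delta}e^{-\lambda s}\big(E\int_{s}^{\infty}e^{\lambda u}|p(u)|^{2}du\big)ds$, one checks that $E\int_{0}^{\infty}e^{\lambda t}|g(t)|^{2}dt<\infty$.

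Next I would solve the ordinary infinite-horizon BSDE $dP(t)=g(t)\,dt+Q(t)\,dB(t)+\int_{\mathbb{R}_{0}}R(t,z)\tilde{N}(dt,dz)$ subject to $E\int_{0}^{\infty}e^{\lambda t}|P(t)|^{2}dt<\infty$: put $P(t):=E[\int_{t}^{\infty}g(s)\,ds\mid\mathcal{F}_t]$, which is well defined and square integrable because $\int_{t}^{\infty}e^{-\lambda s}ds<\infty$ for $\lambda>0$ (Cauchy--Schwarz), and recover $Q$ and $R$ from the martingale representation theorem for $B$ and $\tilde{N}$ applied to the square-integrable martingale $M(t):=P(t)+\int_{0}^{t}g(s)\,ds$. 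Applying It\^o's formula to $e^{\lambda t}|P(t)|^{2}$, taking expectations, using Young's inequality on the cross term and letting $T\to\infty$ yields the a priori estimate $\|(P,Q,R)\|_{\lambda}^{2}\le c(\lambda)\,E\int_{0}^{\infty}e^{\lambda t}|g(t)|^{2}dt$ with $c(\lambda)\to0$ as $\lambda\to\infty$; the same computation applied to the difference of two solutions (a martingale), together with $\liminf_{T\to\infty}E[e^{\lambda T}|P(T)|^{2}]=0$, forces the difference to vanish, giving uniqueness.

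This defines a map $\Gamma(p,q,r):=(P,Q,R)$ on $\mathcal{B}_{\lambda}$. Combining the a priori estimate, the Lipschitz condition (5.1) and the shift identities above, $\Gamma(p,q,r)-\Gamma(p',q',r')$ is bounded in $\mathcal{B}_{\lambda}$-norm by $c(\lambda)\,C^{2}\big(1+e^{-\lambda\delta}+\tfrac{1-e^{-\lambda\delta}}{\lambda}\big)$ times $\|(p-p',q-q',r-r')\|_{\lambda}^{2}$ (up to a fixed combinatorial factor from the number of arguments); since $c(\lambda)\to0$ while the bracketed quantity stays bounded, $\Gamma$ is a strict contraction on $\mathcal{B}_{\lambda}$ once $\lambda$ is large enough. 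Its unique fixed point is the desired solution and obeys the weighted estimate. For a general $\lambda'\in\mathbb{R}$: if $\lambda'\le\lambda$ the estimate is immediate from $e^{\lambda' t}\le e^{\lambda t}$ on $[0,\infty)$; if $\lambda'>\lambda$, rerun the contraction in $\mathcal{B}_{\lambda'}$ and identify its fixed point with the one already found, by uniqueness in $\mathcal{B}_{\lambda}$ (every element of $\mathcal{B}_{\lambda'}$ lies in $\mathcal{B}_{\lambda}$). Hence the solution satisfies the asserted bound for all $\lambda$.

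The main obstacle is the infinite-horizon feature: unlike the classical Peng--Yang anticipated BSDE there is no terminal time, so well-posedness of the inner ordinary BSDE rests entirely on the exponential weight, which is why $\lambda$ must be taken large and positive; at the same time one must keep the advanced and path-segment arguments $p(t+\delta)$, $p_t$, $q(t+\delta)$, $q_t$, $r(t+\delta)$, $r_t$ under control in the weighted norm, and it is precisely at this point that the factors $e^{-\lambda\delta}$ and $(1-e^{-\lambda\delta})/\lambda$ appear and must be shown not to overwhelm the decaying a priori constant $c(\lambda)$. Verifying that the martingale and localization terms indeed have zero expectation under the weighted integrability, and the final passage from one admissible $\lambda$ to all $\lambda\in\mathbb{R}$, are the remaining technical points.
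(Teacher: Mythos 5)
Your argument is correct and rests on the same analytic core as the paper's: the exponentially weighted norm $E\int_0^\infty e^{\lambda t}\{|p|^2+|q|^2+\int|r|^2\nu(dz)\}dt$, It\^o's formula applied to $e^{\lambda t}|p(t)|^2$, Young's inequality producing a constant of the form $C^2/\epsilon$ against the $\lambda$-term, and the shift estimates contributing the factors $e^{-\lambda\delta}$ and $(1-e^{-\lambda\delta})/\lambda$ for the advanced and segment arguments. Where you differ is in the organization. The paper runs a two-stage Picard scheme: Step 1 treats $F$ independent of the $p$-arguments and iterates on $(q^n,r^n)$, invoking Theorem 3.1 of the cited infinite-horizon paper for solvability of each linear stage; Step 2 then iterates on $p^n$ with the $(q,r)$-part resolved exactly at each step by Step 1. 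You instead freeze \emph{all} anticipated and segment arguments at once, solve the resulting driver-frozen infinite-horizon BSDE explicitly by $P(t)=-E[\int_t^\infty g(s)\,ds\mid\mathcal{F}_t]$ (note your displayed formula is missing this minus sign, since $dP=g\,dt+\cdots$) plus martingale representation, and close a single contraction. This buys a self-contained and cleaner proof — no external existence theorem, one fixed point instead of a nested double iteration — and your final passage from one admissible $\lambda$ to all $\lambda\in\mathbb{R}$ via nesting of the spaces $\mathcal{B}_{\lambda}$ and uniqueness is a point the paper leaves implicit. One small caution: in the a priori estimate the $Q,R$ components are controlled only by $\epsilon\,E\int e^{\lambda s}|g|^2ds$, so you must fix $\epsilon$ small \emph{first} (to beat the bounded Lipschitz-and-shift factor) and only then take $\lambda>\epsilon^{-1}$ large; this is exactly the order of quantifiers in the paper's choice $\epsilon=\tfrac{1}{12(2+e^{-\lambda\delta})}$, $\lambda>C_\epsilon$, and your phrasing "$c(\lambda)\to0$ while the bracketed quantity stays bounded" should be read with that ordering in mind.
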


\begin{proof}\newline
\textbf{Step 1:}\newline
Assume F is independent of its second, third and fourth parameter. \newline

Set $q^0(t) :=0$, $r^0(t,z):=0$. For $n \geq 1$, define 
$(p^n(t),q^n(t),r^n(t,z))$  to be the unique solution of the following BSDE:
\begin{align*} 
dp^{n}(t) &=  E\left[F(t,q^{n-1}(t),q^{n-1}(t+\delta),q_{t}^{n-1},r^{n-1}(t,.),r^{n-1}(t+\delta,.),r_{t}^{n-1}(\cdot)) \mid\mathcal{F}_{t}\right]dt \notag\\
&+q^{n}(t)dB(t) +\int_{\mathbb{R}_0} r^{n}(t,z) \tilde{N}(dt,dz); \tag{5.4}
\end{align*}
for $t\in\left[0,\infty\right)$ such that 
\begin{align*}
E\left[ \int_{0}^{\infty} e^{\lambda t} \lvert p^{n}(t) \rvert^{2} dt\right]  <\infty
\end{align*}
This exists by Theorem 3.1 in \cite{HOP}.

Our goal is to show that $(p^n(t),q^n(t),r^n(t,z))$ forms a Cauchy sequence.

By It$\bar{o}$'s formula  we get that
\begin{align*}
0 &= \lvert e^{\lambda t} p^{n+1}(t)-p^{n}(t) \rvert^{2} 
+ \int_{t}^{\infty}  \lambda e^{\lambda s} \lvert p^{n+1}(s)- p^{n}(s) \rvert^{2}ds\\
&+ \int_{t}^{\infty}  e^{\lambda s} \lvert q^{n+1}(s)-q^{n}(s) \rvert^{2}ds\\
&+ \int_{t}^{\infty} e^{\lambda s}  \int_{\mathbb{R}_0} \lvert \left(  r^{n+1}(s,z)-r^{n}(s,z)\right)  \rvert^{2}ds \nu(dz)\\
&+ 2\int_{t}^{\infty}  e^{\lambda s} \langle \left(  p^{n+1}(s)-p^{n}(s)\right), E\left[  F^n - F^{n-1} \mid\mathcal{F}_{s} \right] \rangle ds\\
&+ 2 \int_{t}^{\infty} e^{\lambda s} \left( p^{n+1}(s)-p^{n}(s)\right)  (q^{n+1}(s)-q^{n}(s))dB_{s}\\
&+\int_{t}^{\infty}\int_{\mathbb{R}_0}e^{\lambda s}( |r^{n+1}(s,z)-r^{n}(s,z)|^2\\
&+   2 \left( p^{n+1}(s^{-})-p^{n}(s^{-}) \right) \left(  r^{n+1}(s,z)-r^{n}(s,z)\right) )  \tilde{N}(ds, dz).
\end{align*}

Rearenging, using that for all $\epsilon > 0$, $ab \leq \frac{a^2}{\epsilon} + \epsilon b^2$ we have by the Lipschitz requirement $(5.1)$ 
\begin{align*}
&E[ e^{\lambda t} \lvert   p^{n+1}(t)-p^{n}(t) \rvert^{2} ] \\
&+ \int_{t}^{\infty}  \lambda e^{\lambda s} \lvert p^{n+1}(s)- p^{n}(s) \rvert^{2}ds\\
&+ E[\int_{t}^{\infty}  e^{\lambda s} \lvert q^{n+1}(s)-q^{n}(s) \rvert^{2}ds]\\
&+ E[\int_{t}^{\infty} \int_{\mathbb{R}_0} e^{\lambda s} \lvert \left(  r^{n+1}(s,z)-r^{n}(s,z)\right)  \rvert^{2} \nu(dz)ds]\\
&\leq C_{\epsilon} E[ \int_{t}^{\infty} e^{\lambda s} \lvert p^{n+1}(s)-p^{n}(s) \rvert^{2}ds \\
&+ \epsilon 6 E[\int_{t}^{\infty} e^{\lambda s} \lvert q^{n}(s)-q^{n-1}(s) \rvert^{2}ds]\\
&+ \epsilon 6 E[\int_{t}^{\infty} e^{\lambda s} \lvert q^{n}(s+ \delta)-q^{n-1}(s+\delta) \rvert^{2}ds]\\
&+ \epsilon 6 E[\int_{t}^{\infty} e^{\lambda s} \int_s^{s+\delta} \lvert q^{n}(u)-q^{n-1}(u) \rvert^{2}duds]\\
&+ \epsilon 6 E[\int_{t}^{\infty} e^{\lambda s} \lvert r^{n}(s)-r^{n-1}(s) \rvert_{\mathcal{H}}^{2}ds]\\
&+ \epsilon 6 E[\int_{t}^{\infty} e^{\lambda s} \lvert r^{n}(s+ \delta)-r^{n-1}(s+\delta) \rvert_{\mathcal{H}}^{2}ds]\\
&+ \epsilon 6 E[\int_{t}^{\infty} e^{\lambda s} \int_s^{s+\delta} \lvert r^{n}(u)-r^{n-1}(u) \rvert_{\mathcal{H}}^{2}duds]\\
\end{align*}
where $C_{\epsilon} = \frac{C^2}{\epsilon}$ and we used the abbreviation
\[
F^n(t) := F(t,q^{n}(t),q^{n}(t+\delta),q_{t}^{n},r^{n}(t,.),r^{n}(t+\delta,.),r_{t}^{n}(\cdot)).
\]
 
Note that
\begin{align*}
E[&\int_{t}^{\infty} e^{\lambda s} \lvert q^{n}(s+ \delta)-q^{n-1}(s+\delta) \rvert^{2}ds\\
&\leq   e^{-\lambda \delta} E[\int_{t}^{\infty} e^{\lambda s} \lvert q^{n}(s)-q^{n-1}(s) \rvert^{2}ds].
\end{align*}
Using Fubini
\begin{align*}
&E[\int_{t}^{\infty} \int_s^{s+\delta} e^{\lambda s} \lvert q^{n}(u)-q^{n-1}(u) \rvert^{2}duds\\
&\leq E[\int_{t}^{\infty}  \int_{u-\delta}^{u} e^{\lambda s} \lvert q^{n}(u)-q^{n-1}(u) \rvert^{2}dsdu \\
&\leq (\frac{1}{\lambda}(1- e^{-\lambda \delta})E[\int_{t}^{\infty} e^{\lambda s} \lvert q^{n}(s)-q^{n-1}(s) \rvert^{2}ds]\\
&\leq E[\int_{t}^{\infty} e^{\lambda s} \lvert q^{n}(s)-q^{n-1}(s) \rvert^{2}ds].
\end{align*}
Similiar for $r^n - r^{n-1}$. It now follows that
\begin{align*} 
&Ee^{\lambda t} \lvert p^{n+1}(t)-p^{n}(t) \rvert^{2} ]\notag\\
&+ E[\int_{t}^{\infty} e^{\lambda s} \lvert q^{n+1}(s)-q^{n}(s) \rvert^{2}ds] \notag\\
&+ E[\int_{t}^{\infty} \int_{\mathbb{R}_0} e^{\lambda s} \lvert \left(  r^{n+1}(s,z)-r^{n}(s,z)\right)  \rvert^{2} \nu(dz)ds] \notag\\
&\leq (C_{\epsilon} - \lambda)E[ \int_{t}^{\infty} e^{\lambda s} \lvert p^{n+1}(s)-p^{n}(s) \rvert^{2}ds]\notag\\ 
&+ \epsilon 6(2 + e^{-\lambda \delta}) E[\int_{t}^{\infty} e^{\lambda s} \lvert q^{n}(s)-q^{n-1}(s) \rvert^{2}ds] \notag\\
& + \epsilon  6(2 + e^{-\lambda \delta}) E[\int_{t}^{\infty} e^{\lambda s} \lvert r^{n}(s)-r^{n-1}(s) \rvert_{\mathcal{H}}^{2}ds]. \tag{5.5}
\end{align*}
Choosing $\epsilon = \frac{1}{12(2+e^{-\lambda \delta})}$ so that
\begin{align*}
&E e^{\lambda t} \lvert p^{n+1}(t)-p^{n}(t) \rvert^{2} ] \\
&+ E[\int_{t}^{\infty} e^{\lambda s} \lvert q^{n+1}(s)-q^{n}(s) \rvert^{2}ds]\\
&+ E[\int_{t}^{\infty} \int_{\mathbb{R}_0} e^{\lambda s} \lvert \left(  r^{n+1}(s,z)-r^{n}(s,z)\right)  \rvert^{2} \nu(dz)ds]\\
&\leq (C_{\epsilon} - \lambda)E[ \int_{t}^{\infty} e^{\lambda s} \lvert p^{n+1}(s)-p^{n}(s) \rvert^{2}ds] 
+ \frac{1}{2}E[\int_{t}^{\infty} e^{\lambda s} \lvert q^{n}(s)-q^{n-1}(s) \rvert^{2}ds]\\
& + \frac{1}{2} E[\int_{t}^{\infty} e^{\lambda s} \lvert r^{n}(s)-r^{n-1}(s) \rvert_{\mathcal{H}}^{2}ds]\\
\end{align*}
This implies that
\begin{align*}
&- \frac{\partial }{\partial t}(e^{(C_{\epsilon} - \lambda) t}E[ \int_0^{\infty}  e^{\lambda s} \lvert p^{n+1}(t)-p^{n}(t) \rvert^{2} ]\\
&+ e^{(C_{\epsilon} - \lambda) t}E[\int_{t}^{\infty} e^{\lambda s} \lvert q^{n+1}(s)-q^{n}(s) \rvert^{2}]ds\\
&+ e^{(C_{\epsilon} - \lambda) t}E[\int_{t}^{\infty} \int_{\mathbb{R}_0} e^{\lambda s} \lvert \left(  r^{n+1}(s,z)-r^{n}(s,z)\right)  \rvert^{2} \nu(dz)ds]\\
&\leq \frac{1}{2}e^{(C_{\epsilon} - \lambda) t} E[\int_{t}^{\infty} e^{\lambda s} \lvert q^{n}(s)-q^{n-1}(s) \rvert^{2}ds\\
& + \frac{1}{2}e^{(C_{\epsilon} - \lambda) t} E\int_{t}^{\infty} e^{\lambda s} \lvert r^{n}(s)-r^{n-1}(s) \rvert_{\mathcal{H}}^{2}ds\\
\end{align*}
Integrating the last inequality we get that
\begin{align*} 
& E[ \int_0^{\infty} e^{\lambda s} \lvert p^{n+1}(t)-p^{n}(t) \rvert^{2} dt]\\
&+ \int_0^{\infty}e^{(C_{\epsilon} - \lambda) t}E[\int_{t}^{\infty} e^{\lambda s} \lvert q^{n+1}(s)-q^{n}(s) \rvert^{2}ds] \notag\\
&+ \int_0^{\infty}e^{(C_{\epsilon} - \lambda) t}E[\int_{t}^{\infty} \int_{\mathbb{R}_0} e^{\lambda s} \lvert \left(  r^{n+1}(s,z)-r^{n}(s,z)\right)  \rvert^{2} \nu(dz)ds]dt \notag\\
&\leq \frac{1}{2}\int_0^{\infty}e^{(C_{\epsilon} - \lambda) t} E[\int_{t}^{\infty} e^{\lambda s} \lvert q^{n}(s)-q^{n-1}(s) \rvert^{2}dsdt \notag\\
& + \frac{1}{2}\int_0^{\infty}e^{(C_{\epsilon} - \lambda) t} E\int_{t}^{\infty} e^{\lambda s} \lvert r^{n}(s)-r^{n-1}(s) \rvert_{\mathcal{H}}^{2}dsdt. \tag{5.6}
\end{align*}
So that
\begin{align*}
&\int_0^{\infty}e^{(C_{\epsilon} - \lambda) t}E[\int_{t}^{\infty} e^{\lambda s} \lvert q^{n+1}(s)-q^{n}(s) \rvert^{2}ds]\\
&+ \int_0^{\infty}e^{(C_{\epsilon} - \lambda) t}E[\int_{t}^{\infty} \int_{\mathbb{R}_0} e^{\lambda s} \lvert \left(  r^{n+1}(s,z)-r^{n}(s,z)\right)  \rvert^{2} \nu(dz)ds]dt\\
&\leq \frac{1}{2}\int_0^{\infty}e^{(C_{\epsilon} - \lambda) t} E[\int_{t}^{\infty} e^{\lambda s} \lvert q^{n}(s)-q^{n-1}(s) \rvert^{2}dsdt\\
& + \frac{1}{2}\int_0^{\infty}e^{(C_{\epsilon} - \lambda) t} E\int_{t}^{\infty} e^{\lambda s} \lvert r^{n}(s)-r^{n-1}(s) \rvert_{\mathcal{H}}^{2}dsdt\\
\end{align*}
This gives that
\begin{align*}
&\int_0^{\infty}e^{(C_{\epsilon} - \lambda) t}E[\int_{t}^{\infty} e^{\lambda s} \lvert q^{n+1}(s)-q^{n}(s) \rvert^{2}ds]\\
&+ \int_0^{\infty}e^{(C_{\epsilon} - \lambda) t}E[\int_{t}^{\infty} \int_{\mathbb{R}_0} e^{\lambda s} \lvert \left(  r^{n+1}(s,z)-r^{n}(s,z)\right)  \rvert^{2} \nu(dz)ds]dt\\
&\leq \frac{1}{2^n}C_3,
\end{align*}
if  $\lambda > \frac{C}{\epsilon}$.
It then follows from $(5.6)$ that 
\[
E[ \int_0^{\infty}  e^{\lambda t} \lvert p^{n+1}(t)-p^{n}(t) \rvert^{2} ] \leq \frac{1}{2^n}C_3.
\]
From $(5.5)$ and $(5.6)$, we now get
\begin{align*}
&E[\int_{t}^{\infty} \int_{\mathbb{R}_0} e^{\lambda s} \lvert \left(  r^{n+1}(s,z)-r^{n}(s,z)\right)  \rvert^{2} \nu(dz)ds]dt\\
&+ E[\int_{t}^{\infty} e^{\lambda s}\lvert q^{n+1}(s)-q^{n}(s) \rvert^{2}ds] \leq \frac{1}{2^n}C_3nC_{\epsilon}.
\end{align*}

From this we conclude that there exist progressively measurable processes $(p(t),q(t),r(t,z))$, such that
\begin{align*}
\underset{n \to \infty}{\lim} E[ e^{\lambda t} \lvert p^{n}(t)-p(t) \rvert^{2} dt] &= 0,\\
\underset{n \to \infty}{\lim} E[ \int_0^{\infty}  e^{\lambda s}\lvert p^{n}(t)-p(t) \rvert^{2} dt] &= 0,\\
\underset{n \to \infty}{\lim} E[ \int_0^{\infty}  e^{\lambda s}\lvert p^{n}(t)-p(t) \rvert^{2} dt] &= 0,\\
\underset{n \to \infty}{\lim} E[ \int_0^{\infty}  e^{\lambda s}\lvert p^{n}(t)-p(t) \rvert^{2} dt] &= 0,\\
\underset{n \to \infty}{\lim} E[\int_{t}^{\infty} \int_{\mathbb{R}_0} e^{\lambda s}\lvert \left(  r^{n}(s,z)-r(s,z)\right)  \rvert^{2} \nu(dz)ds]dt &= 0.
\end{align*}
Letting $n \to \infty$ in $(5.4)$ we see that $(p(t),q(t),r(t,z))$ satisfies
\begin{align*}
dp(t) &= E\left[  F(t,q(t),q(t+\delta),q_{t},r(t,.),r(t+\delta,.),r_{t}(\cdot)) \mid\mathcal{F}_{t}\right]  dt\\
&+q(t)dB(t)
+\int_{\mathbb{R}_0} r(t,z) \tilde{N}(dt,dz),
\end{align*}
for all $t>0$.\newline 

\textbf{Step 2:}\newline
General F.\newline

Let $p^0(t) =0$. For $n \geq 1$ define $(p^n(t),q^n(t),r^n(t,z))$ to be the unique solution of the following BSDE:
\begin{align*} 
dp^n(t) &= E[F(t,p^{n-1}(t),p^{n-1}(t+\delta),p^{n-1}_t,q^n(t),q^n(t+\delta),q^n_t,r^n(t),r^n(t+\delta),r^n_t)|\mathcal{F}_t]dt\\
&+q^n(t)dB(t) + \int_{\mathbb{R}_0}r^n(t,z)\tilde{N}(dz,dt), 
\end{align*}
for $t \in [0,\infty)$.
The existence of $(p^n(t),q^n(t),r^n(t,z))$ was proved in Step 1. 

By using  the same arguments as above, we deduce that
\begin{align*}
&E  e^{\lambda t} \lvert p^{n+1}(t)-p^{n}(t) \rvert^{2} ] \\
&+ E[\int_{t}^{\infty} e^{\lambda s} \lvert q^{n+1}(s)-q^{n}(s) \rvert^{2}ds]\\
&+ E[\int_{t}^{\infty} e^{\lambda s} \int_{\mathbb{R}_0} \lvert \left(  r^{n+1}(s,z)-r^{n}(s,z)\right)  \rvert^{2} \nu(dz)ds]\\
&\leq (C_{\epsilon} - \lambda) E[ \int_{t}^{\infty}  e^{\lambda s}\lvert p^{n+1}(s)-p^{n}(s) \rvert^{2}ds] 
+ \frac{1}{2}E[\int_{t}^{\infty} e^{\lambda s} \lvert p^{n}(s)-p^{n-1}(s) \rvert^{2}ds]\\
\end{align*}
This implies that
\begin{align*}
 -\frac{d}{dt} (e^{(C_{\epsilon} - \lambda)  t} E[ \int_t^{\infty} e^{\lambda s} |p^{n+1}(s) - p^{n}(s)|^2 ds ] ) 
\leq  \frac{1}{2} e^{(C_{\epsilon} - \lambda)  t} E[ \int_t^{\infty} e^{\lambda s} |p^{n}(s) - p^{n-1}(s)|^2 ds ].
\end{align*}
Integrating from $0$ to $\infty$, we get
\begin{align*}
E[ \int_0^{\infty} e^{\lambda s} |p^{n+1}(s) - p^{n}(s)|^2 ds ] 
&\leq  \frac{1}{2} \int_0^{\infty} e^{(C_{\epsilon} - \lambda)  t} E[ \int_t^{\infty} e^{\lambda s}|p^{n}(s) - p^{n-1}(s)|^2 ds ]dt
\end{align*}
So if $\lambda \geq C_{\epsilon}$ then by iteration we see that
\begin{align*}
E[ \int_0^{\infty} e^{\lambda s}|p^{n+1}(s) - p^{n}(s)|^2 ds ] 
&\leq \frac{K}{2^n(\lambda - C_{\epsilon})^n},
\end{align*}
for some constant $K$.\newline
\textbf{Uniqueness:} \newline
In order to prove the uniqueness, we assume that there
are two solutions $\left( p^{1}(t),q^{1}(s),r^{1}(s,z)\right) $ and $\left(
p^{2}(t),q^{2}(s),r^{2}(s,z)\right) $ of the ABSDE%
\begin{eqnarray*}
&&%
\begin{array}{c}
dp(t)=E\left[ F(t,p(t),p(t+\delta ),p_{t},q(t),q(t+\delta
),q_{t},r(t),r(t+\delta ),r_{t})\mid \mathcal{F}_{t}\right] dt \\ 
+q(t)dB(t)+\int\limits_{%
\mathbb{R}
_{0}}r(t,z)\overset{\sim }{N}(dt,dz);\text{ \ \ \ \ \ \ \ \ \ \ \ \ \ \ \ }%
t\in \left[ 0,\infty \right)%
\end{array}
\\
&&%
\begin{array}{c}
E\left[ \int\limits_{0}^{\infty }\text{ }e^{\lambda \text{ }t}\text{ }%
\left\vert p(t)\right\vert ^{2}\text{ }dt\right] <\infty \text{ ; }\lambda
\in 
\mathbb{R}%
\end{array}%
\text{.}
\end{eqnarray*}

By It$\bar{o}$'s formula, we have

\begin{equation*}
\begin{array}{l}
E\left[ e^{\lambda \text{ }t}\left\vert p^{1}(t)-p^{2}(t)\right\vert ^{2}%
\right] +E\left[ \int\limits_{t}^{\infty }\lambda \text{ }e^{\lambda \text{ 
}s}\left\vert p^{1}(s)-p^{2}(s)\right\vert ds\right] \\ 
+E\left[ \int\limits_{t}^{\infty }e^{\lambda \text{ }s}\left\vert
q^{1}(s)-q^{2}(s)\right\vert ^{2}ds\right] +E\left[ \int\limits_{t}^{\infty
}e^{\lambda \text{ }s}\int\limits_{%
\mathbb{R}
_{0}}\left\vert r^{1}(s,z)-r^{2}(s,z)\right\vert ^{2}ds\nu (dz)\right] \\ 
=2E\int\limits_{t}^{\infty }e^{\lambda \text{ }s}\left[ 
\begin{array}{c}
\left\vert p^{1}(s)-p^{2}(s)\right\vert%
\end{array}%
\right. \\ 
\times \left( E\left[ 
\begin{array}{c}
F(s,p^{1}(s),p^{1}(s+\delta ),p_{s}^{1},q^{1}(s),q^{1}(s+\delta
),q_{s}^{1},r^{1}(s),r^{1}(s+\delta ),r_{s}^{1})\mid \mathcal{F}_{s}%
\end{array}%
\right] \right. \\ 
\\ 
\left. \left. -E\left[ 
\begin{array}{c}
F(s,p^{2}(s),p^{2}(s+\delta ),p_{s}^{2},q^{2}(s),q^{2}(s+\delta
),q_{s}^{2},r^{2}(s),r^{2}(s+\delta ),r_{s}^{2})\mid \mathcal{F}_{s}%
\end{array}%
\right] \right) \right] ds%
\end{array}%
\end{equation*}%
\begin{equation*}
\begin{array}{l}
\leq 2E\int\limits_{t}^{\infty }%
\begin{array}{c}
e^{\lambda \text{ }s}\left[ \left\vert p^{1}(s)-p^{2}(s)\right\vert \right.%
\end{array}
\\ 
\times C\left( \left\vert p^{1}(s)-p^{2}(s)\right\vert +\left\vert
p^{1}(s+\delta )-p^{2}(s+\delta )\right\vert +\int\limits_{s}^{s+\delta
}\left\vert p^{1}(u)-p^{2}(u)\right\vert du\right. \\ 
\begin{array}{c}
+\left\vert q^{1}(s)-q^{2}(s)\right\vert +\left\vert q^{1}(s+\delta
)-q^{2}(s+\delta )\right\vert +\int\limits_{s}^{s+\delta }\left\vert
q^{1}(u)-q^{2}(u)\right\vert du%
\end{array}
\\ 
\left. \left. +\left\vert r^{1}(s)-r^{2}(s)\right\vert _{\mathcal{H}%
}^{2}+\left\vert r^{1}(s+\delta )-r^{2}(s+\delta )\right\vert _{\mathcal{H}%
}^{2}+\int\limits_{s}^{s+\delta }\left\vert r^{1}(u)-r^{2}(u)\right\vert _{%
\mathcal{H}}^{2}du\right) \right] ds%
\end{array}%
\end{equation*}

By the above inequalities for $(p,q,r)$ and the fact that $%
\begin{array}{c}
2ab\leq \dfrac{a^{2}}{\epsilon }+\epsilon b^{2}%
\end{array}%
$, we have that
\begin{equation*}
\begin{array}{l}
E\left[ e^{\lambda \text{ }t}\left\vert p^{1}(t)-p^{2}(t)\right\vert ^{2}%
\right] +E\left[ \int\limits_{t}^{\infty }e^{\lambda \text{ }s}\left\vert
q^{1}(s)-q^{2}(s)\right\vert ^{2}ds\right] \\ 
+E\left[ \int\limits_{t}^{\infty }e^{\lambda \text{ }s}\int\limits_{%
\mathbb{R}
_{0}}\left\vert r^{1}(s,z)-r^{2}(s,z)\right\vert ^{2}ds\nu (dz)\right] \\ 
\leq (\dfrac{3C^{2}}{\epsilon }-\lambda \text{ })\text{ }E\left[
\int\limits_{t}^{\infty }e^{\lambda \text{ }s}\left\vert
p^{1}(s)-p^{2}(s)\right\vert ^{2}ds\right] \\ 
+(2+e^{-\lambda \text{ }%
\delta })\text{ }\epsilon E\left[ \int\limits_{t}^{\infty }e^{\lambda \text{
}s}\left\vert p^{1}(s)-p^{2}(s)\right\vert ^{2}ds\right] \\ 
+(2+e^{-\lambda \text{ }%
\delta })\text{ }\epsilon \text{ }E\int\limits_{t}^{\infty }e^{\lambda 
\text{ }s}\left[ \left\vert q^{1}(s)-q^{2}(s)\right\vert ^{2}+\left\vert
r^{1}(s,z)-r^{2}(s,z)\right\vert _{\mathcal{H}}^{2}\right] ds%
\end{array}%
\end{equation*}

Taking $\epsilon $ such that $%
\begin{array}{c}
(2+e^{-\lambda \text{ }%
\delta })\text{ }\epsilon =\dfrac{1}{2}%
\end{array}%
$%
\begin{equation*}
\begin{array}{l}
E\left[ e^{\lambda \text{ }t}\left\vert p^{1}(t)-p^{2}(t)\right\vert ^{2}%
\right] +E\left[ \int\limits_{t}^{\infty }e^{\lambda \text{ }s}\left\vert
q^{1}(s)-q^{2}(s)\right\vert ^{2}ds\right] \\ 
+E\left[ \int\limits_{t}^{\infty }e^{\lambda \text{ }s}\int\limits_{%
\mathbb{R}
_{0}}\left\vert r^{1}(s,z)-r^{2}(s,z)\right\vert ^{2}ds\nu (dz)\right] \\ 
\leq (\dfrac{3C^{2}}{\epsilon }-\lambda +\dfrac{1}{2}\text{ })E\left[
\int\limits_{t}^{\infty }e^{\lambda \text{ }s}\left\vert
p^{1}(s)-p^{2}(s)\right\vert ^{2}ds\right] \\ 
+\dfrac{1}{2}E\left[ \int\limits_{t}^{\infty }\left\vert
q^{1}(s)-q^{2}(s)\right\vert ^{2}ds\right] \\ 
+\dfrac{1}{2}E\left[ \int\limits_{t}^{\infty }\left\vert
r^{1}(s,z)-r^{2}(s,z)\right\vert _{\mathcal{H}}^{2}ds\right]%
\end{array}%
\end{equation*}

We get 
\begin{equation*}
\begin{array}{l}
E\left[ e^{\lambda \text{ }t}\left\vert p^{1}(t)-p^{2}(t)\right\vert ^{2}%
\right] +\dfrac{1}{2}E\left[ e^{\lambda \text{ }s}\left\vert
q^{1}(s)-q^{2}(s)\right\vert ^{2}ds\right] \\ 
+\dfrac{1}{2}E\left[ \int\limits_{t}^{\infty }e^{\lambda \text{ }%
s}\int\limits_{%
\mathbb{R}
_{0}}\left\vert r^{1}(s,z)-r^{2}(s,z)\right\vert ^{2}ds\nu (dz)\right] \\ 
\leq (\dfrac{3C^{2}}{\epsilon }-\lambda +\dfrac{1}{2}\text{ })E\left[
\int\limits_{t}^{\infty }e^{\lambda \text{ }s}\left\vert
p^{1}(s)-p^{2}(s)\right\vert ^{2}ds\right] \text{.}%
\end{array}%
\end{equation*}

Using the fact that $%
\begin{array}{c}
\lambda \geq \dfrac{3C^{2}}{\epsilon }+\dfrac{1}{2}%
\end{array}%
$, we obtain for all $t\in \left[ 0,\infty \right) $,
\begin{equation*}
\begin{array}{c}
E\left[ e^{\lambda \text{ }t}\left\vert p^{1}(t)-p^{2}(t)\right\vert ^{2}%
\right] =0\text{,}%
\end{array}%
\end{equation*}
which proves that $p^{1}(t)$ and $p^{2}(t)$ are indistinguishable.
\end{proof}

\begin{acknowledgment}
 We thank Salah-E.A. Mohammed for fruitful conversations.
\end{acknowledgment}

\bibliographystyle{plain}
\bibliography{references}

\begin{thebibliography}{10}

\bibitem{BBP}
G.~Barles, R.~Buckdahn, and E.~Pardoux.
\newblock Backward stochastic differential equations and integral-partial
  differential equations.
\newblock {\em Stochastics and Stochastics Reports}, 60:57--83, 2009.

\bibitem{CW}
L.~Chen and Z.~Wu.
\newblock Maximum principle for the stochastic optimal control problem with
  delay and application.
\newblock {\em Automatica}, 46:1074--1080, 2010.

\bibitem{EOS}
I.~Elsanosi, B.~{\O}ksendal, and A.~Sulem.
\newblock Some solvable stochastic control problems with delay.
\newblock {\em Stochastics and Stochastics Reports}, 71:69--89, 2000.

\bibitem{HOP}
S.~Haadem, B.~{\O}ksendal, and F.~Proske.
\newblock Maximum principles for jump diffusion processes with infinite
  horizon.
\newblock {\em eprint arXiv:1206.1719}, 06, 2012.

\bibitem{halkin}
H.~Halkin.
\newblock Necessary conditions for optimal control problems with infinite
  horizons.
\newblock {\em Econometrica}, 42:267--272, 1974.

\bibitem{LP}
J.~Li and S.~Peng.
\newblock Stochastic optimization theory of backward stochastic differential
  equations with jumps and viscosity solutions of {H}amilton-{J}acobi-{B}ellman
  equations.
\newblock {\em Nonlinear Analysis}, 70:1779--1796, 2009.

\bibitem{MV}
B.~Maslowski and P.~Veverka.
\newblock Infinite horizon maxmimum principle for the discounted control
  problem - incomplete version.
\newblock {\em arXiv}, 2011.

\bibitem{Mohammed}
S.-E.A. Mohammed.
\newblock Stochastic differential systems with memory: Theory, examples and
  applications.
\newblock In B.~{\O}ksendal L.~Decreusefond, Jon~Gjerde and A.S. Ustunel,
  editors, {\em Proceedings of The Sixth Workshop on Stchastic Analysis, Geilo,
  Norway, July 29-August 4, 1996, Stochastic Analysis and Related Topics VI.
  The Geilo Workshop}, pages 1--77. Progress in Probability, Birkh\"{a}user,
  1996.

\bibitem{MS}
S.-E.A. Mohammed and M.K.R. Scheutzow.
\newblock Lyapunov exponents of linear stochastic functional differential
  equations driven by semimartingales, part ii: Examples and case studies.
\newblock {\em Ann. of Prob.}, 6:1210 -- 1240, 1997.

\bibitem{OS}
B.~{\O}ksendal and A.~Sulem.
\newblock {\em A maximum principle for optimal control of stochastic systems
  with delay, with applications to finance: In J.M. Menaldi, E. Rofman and A.
  Sulem (editors): Optimal Control and Partial Differential Equations -
  Innovations and Applications}.
\newblock IOS Press, Amsterdam, 2000.

\bibitem{OS2}
B.~{\O}ksendal and A.~Sulem.
\newblock {\em {A}pplied {S}tochastic {C}ontrol of {J}ump {D}iffusions}.
\newblock Springer, 2nd edition, 2007.

\bibitem{OSZ}
B.~{\O}ksendal, A.~Sulem, and T.~Zhang.
\newblock Optimal control of stochastic delay equations and time-advanced
  backward stochastic differential equations.
\newblock {\em Adv. Appl. Prob.}, 43:572--596, 2011.

\bibitem{Pardoux}
E.~Pardoux.
\newblock Bsdes', weak convergence and homogenizations of semilinear pdes.
\newblock In F.H. Clark and R.J. Stern, editors, {\em Nonlinear Analysis,
  Differential Equations and Control}, pages 503--549. Kluwer Academic,
  Dordrecht, 1999.

\bibitem{PS}
S.~Peng and Y.~Shi.
\newblock Infinite horizon forward-backward stochastic differential equations.
\newblock {\em Stoch. Proc. and their Appl.}, 85:75--92, 2000.

\bibitem{BSDE}
M.~Royer.
\newblock Backward stochastic differential equations with jumps and related
  non-linear expectations.
\newblock {\em Stochastic Processes and Their Applications}, 116:1358--1376,
  2006.

\bibitem{Situ}
R.~Situ.
\newblock On solutions of backward stochastic differential equations with jumps
  and with non-{L}ipschitzian coefficients in {H}ilbert spaces and stochastic
  control.
\newblock {\em Statistics and Probability Letters}, 60:279--288, 2002.

\bibitem{Yin}
J.~Yin.
\newblock On solutions of a class of infinite horizon fbsdes.
\newblock {\em Statistics and Probability Letters}, 78:2412--2419, 2008.

\end{thebibliography}

\end{document}